\newcommand{\x}[0]{\boldsymbol{x}}
\newtheorem{theorem}{Theorem}[section]
\newtheorem{lemma}{Lemma}[section]
\newtheorem{remark}{Remark}[section]
\newcommand{\nw}[1]{\vert\vert #1\vert\vert_W}
\newcommand{\nv}[1]{\vert\vert #1\vert\vert_V}
\newcommand{\nvh}[1]{\vert\vert #1\vert\vert_{V_h}}
\newcommand{\intq}{\int_{0}^{T}\int_{\Omega}}
\newcommand{\dx}{\ d\Omega dt}
\newcommand{\wh}[1]{\widehat{#1}}
\begin{document}
	\begin{center}
		{{\bf \large {\rm {\bf Space-time isogeometric method for a linear fourth order time dependent problem}}}}
	\end{center}
	\begin{center}

		{\textmd {{\bf Shreya Chauhan*}}}\footnote{\it *Corresponding Author, Department of Basic Sciences, Institute of Infrastructure, Technology, Research and Management, Gujarat, India, (shreyachauhan898@gmail.com)}
		{\textmd {\bf Sudhakar Chaudhary}}\footnote{\it Department of Basic Sciences, Institute of Infrastructure, Technology, Research and Management, Gujarat, India, (dr.sudhakarchaudhary@gmail.com)}
		\end{center}
	\begin{abstract}
		This article focuses on the space-time isogeometric method for a linear time dependent fourth order problem. Using an auxiliary variable, first the problem is split into a system of two second order differential equations and then the system is discretized by employing the tensor product spline spaces of time and spatial variables. We use the Babu\u{s}ka's theorem to prove the well-posedness of the continuous variational formulation. Also, the inf-sup stability condition at discrete level is established, which we use to prove the error estimates for the proposed method. Finally, to demonstrate the convergence of the scheme, few numerical results are reported. 
	\end{abstract}
	\noindent	{\bf Keywords:}  Fourth order time dependent problem, Space-time method, Isogeometric analysis,  Error estimates\\\\
\noindent {\bf AMS(MOS):} 65M60, 65D07, 65M12, 65M15

\section{Introduction}
In this article, we consider the following linear fourth order equation: 
\begin{equation}\label{3eqnmain}
	\begin{split}
		\partial_t u+\Delta^2u-\Delta u&=f\quad\mbox{in }Q_T,\\
		u(\x,0)&=0\quad\mbox{in }\Omega,\\
        u=\Delta u&=0\quad\mbox{on }\Sigma_T,
	\end{split}
\end{equation}
where $Q_T=\Omega\times(0,T]$, $\Sigma_T=\partial\Omega\times(0,T]$, $\Omega\subset \mathbb{R}^2$ is a convex domain with polygonal boundary $\partial\Omega$, $f:\Omega\times(0,T)\rightarrow\mathbb R$ is a given function and $T>0$. Equation \eqref{3eqnmain} can be considered as a particular case of linearized version of the Extended Fisher Kolmogorav (EFK) equation \cite{app3}. 
\par Fourth order time dependent problems appear in the modeling of various physical and engineering phenomena, for details see \cite{app1,app3}. Due to its wide range of real world applications, such problems have attracted the attention of many researchers. In \cite{Li2005}, the mixed Finite Element Method (FEM) for the fourth order linear elliptic and parabolic problems was studied using the radial basis functions. A local discontinuous Galerkin (dG) method for linear fourth order time dependent problems was analyzed in \cite{dong2009analysis}. The use of local dG method for discretization leads to an increase in the number of unknowns in obtaining the approximate solution. As an another approach, authors in \cite{liu2018mixed} introduced a penalty free mixed dG method for fourth order problems by considering the mixed formulation of the problem. Recently, in \cite{gu2025stabilizer} the fourth order linear parabolic problem is solved numerically by combining a stabilizer free weak Galerkin method with implicit $\theta$-scheme in time. Danumjaya et al. \cite{danumjaya2012mixed} analyzed the mixed FEM for the nonlinear EFK equation. Das et al. \cite{DAS202452} proposed a unified mixed method employing the biorthogonal basis functions to obtain the approximate solution of more general nonlinear EFK equation with different boundary conditions. Apart from the  conforming FEM, several other methods such as nonconforming FEM \cite{PEI20181383}, discontinuous Galerkin method \cite{dgmethod}, $C^0$ interior penalty method \cite{GUDI20131} have also been considered for the numerical treatment of nonlinear fourth order problems. In contrast to the existing works on fourth order time dependent problems, in this work the isogeometric discretization of the space-time variational formulation of \eqref{3eqnmain} is considered.
\par The Isogeometric Analysis (IgA) is a numerical scheme introduced by Hughes et al. \cite{IgAIntro} with the aim to enhance the compatibility between Computer Aided Design (CAD) and Finite Element Analysis (FEA). For more details and advantages of IgA, we refer to the monograph \cite{IgAbook}. The main idea behind IgA is to use the same basis functions to represent the computational domain and to approximate the solution to a partial differential equation. The most commonly used basis functions are B-splines and Non Uniform Rational B-Splines (NURBS). The use of splines with high-continuity provides more accurate solution compared to the standard $C^0$ piecewise-polynomial approximation \cite{Bressan2019}. 
\par The basis functions used in IgA can be globally $C^1$ smooth making it a suitable choice for obtaining numerical approximation of the fourth order problems in the frame work of standard Galerkin method \cite{TAGLIABUE2014277}. In \cite{MOORE2018673}, the discontinuous Galerkin IgA is presented for the fourth order elliptic problem on domains having non-overlapping patches. The mixed isogeometric method for spectral approximation of higher order differential operators is analyzed in \cite{DENG2019297}. The fourth order Cahn-Hilliard equation in IgA framework was first studied by G\'omez et al. \cite{GOMEZ20084333}. Since this seminal work, the isogeometric discretization with various temporal discretization methods have been considered for the Cahn-Hilliard equation \cite{KAESSMAIR2016783,KASTNER2016360,ZHANG2019112569}. Recently in \cite{Meng2024}, the authors provided the theoretical convergence results of time-stepping IgA for Cahn-Hilliard equation. 
\par The standard approach for solving the time dependent problems numerically is to first discretize the spatial domain using spatial discretization techniques like FEM, IGA etc and then to discretize the temporal domain using time-stepping methods  or vice-versa. In such methods, the space and time variable are treated distinctively and usually different discretizations are applied to each. This leads to a sequential procedure which requires parallelization in time \cite{50years}. An alternate approach is to consider the simultaneous discretization in space and time, that is the space-time methods. In space-time methods, the time variable is treated as just another spatial variable and the entire space-time cylinder is discretize. The space-time methods have advantages over the time-stepping methods, specifically in the adaptive discretization for refining in space and time simultaneously and in the treatment of problems with moving domains.  We refer to \cite{Steinbachreview} for an comprehensive overview of various space-time methods. Using the space-time formulation in IgA, we can exploit the properties of smooth basis function in time as well. In \cite{LANGER2016342}, the space-time isogeometric analysis for parabolic evolution equations was first introduced by employing the time upwind test functions to produce coercive bilinear form with respect to a discrete norm. Authors in \cite{LOLI20202586} proposed space-time isogeometric method for parabolic problems considering the space-time variational formulation in Bochner spaces. Therein authors also provided an efficient solver with robust preconditioner to solve the linear system.
The space-time least square isogeometric method with efficient solver is analyzed in \cite{montardini2020space}. In \cite{saade2021space,antonietti2023,chaudhary2024space}, the non-linear time dependent problems are analyzed using space-time isogeometric method.
\par Motivated from above, in this work we analyze the space-time mixed isogeometric method for problem \eqref{3eqnmain}. To the best of our knowledge, this is the first attempt to apply the space-time isogeometric method to the time dependent fourth order equation (linear EFK equation). To this end, we split problem \eqref{3eqnmain} into an equivalent system of second order equations by setting $-\Delta u=v$ as follows:
\begin{equation}\label{3pro1}
	\begin{split}
		\partial_tu-\Delta v+v&=f\quad\mbox{in }Q_T,\\
		-\Delta u&=v\quad\mbox{in }Q_T,\\
		u(\x,0)&=0\quad\mbox{in }\Omega,\\
		u=v&=0\quad\mbox{on }\Sigma_T.
	\end{split}
\end{equation}
This decoupling allows the use of $C^0$ finite element spaces to approximate the solution of \eqref{3eqnmain}. Inspired from \cite{stfempara}, we consider the space-time variational formulation of the system \eqref{3pro1}. As far as we know, the existence-uniqueness and error estimate results of the mixed space-time IgA for problem \eqref{3eqnmain} are totally new.
\par The remainder of the paper is outlined as follows: We introduce the space-time variational formulation for the problem \eqref{3pro1} and discuss the well-posedness results in Section \ref{3sec3stweak}. In Section \ref{3sec4dis}, the space-time isogeometric discretization of \eqref{3pro1} is developed and the unique solvability results with the corresponding error estimates of the scheme are presented. Few numerical experiments are provided to support the theoretical convergence results in Section \ref{3sec5numerics}.  Finally, in Section \ref{3sec5} we give some concluding remarks.\\


\noindent {\bf Notation.}  Let $X$ be a Banach space and $H^1(\Omega)$ be the standard Sobolev space. By $H_0^1(\Omega)$, we denote the space of all $H^1(\Omega)$ functions vanishing on $\partial\Omega$ and the dual of $H_0^1(\Omega)$ is denoted by $H^{-1}(\Omega)$. We denote by $L^2(X)$ the Bochner space defined by $L^2(X)=\{w: (0,T)\rightarrow X : \int_{0}^{T}||w||_X^2 < \infty \} .$  Let $V=\{w\in L^2(H_0^1(\Omega)):{\partial_t w\in L^2(H^{-1}(\Omega))}, w(\x,0)=0\quad\mbox{in }\Omega\}$, $W=L^2(H_0^1(\Omega))$ and $W^*=L^2(H^{-1}(\Omega))$. The norms on the spaces $V$ and $W$ are as follows:\\
\begin{equation*}
    \vert\vert v\vert\vert_V^2=\vert\vert\partial_tu\vert\vert_{L^2(H^{-1}(\Omega))}^2+\vert\vert u\vert\vert^2_{L^2(H_0^1(\Omega))}\mbox{ and }\vert\vert w\vert\vert_W=\vert\vert w\vert\vert_{L^2(H_0^1(\Omega))}.
\end{equation*}
We also introduce the product spaces $\mathcal{V}=V\times W$ and $\mathcal{W}=W\times W$ . The norms on these product spaces are as follows: 
\begin{equation*}
    \vert\vert(u,v)\vert\vert^2_{\mathcal{V}}=\vert\vert u\vert\vert^2_V+\vert\vert v\vert\vert^2_W\mbox{ and }\vert\vert (\phi,\psi)\vert\vert^2_\mathcal{W}=\vert\vert \phi\vert\vert^2_W+\vert\vert \psi\vert\vert^2_W.
\end{equation*}
Throughout the paper, $C$ denotes a generic constant which does not depend on the discretization parameters. 
\section{Space-time weak formulation and well-posedness results}\label{3sec3stweak}


In this section, we begin with the space-time weak formulation of the system of equations \eqref{3pro1} and recall some known results. Then these results will be used to show the existence-uniqueness of solution to space-time weak formulation.\\
Given $f\in W^*$, the space time formulation is to find $u\in V$ and $v\in W$ such that
\begin{equation}\label{3pro1st}
	\begin{split}
		\int_{0}^{T}\int_{\Omega}\partial_tu\phi\ d\Omega dt&+\int_{0}^{T}\int_{\Omega}\nabla v\nabla\phi\ d\Omega dt+\int_{0}^{T}\int_{\Omega}v\phi\ d\Omega dt\\
		&=\int_{0}^{T}\int_{\Omega}f\phi\ d\Omega dt\quad\forall \phi\in W,\\
		\int_{0}^{T}\int_{\Omega}\nabla u\nabla \psi\ d\Omega dt&=\int_{0}^{T}\int_{\Omega}v\psi\ d\Omega dt\quad\forall \psi\in W.
	\end{split}
\end{equation}
The following saddle point problem is an equivalent version of \eqref{3pro1st}: 
Find $(u,v)\in\mathcal{V}$ such that
\begin{equation}\label{3pro1bf}
	a(u,v;\phi,\psi)=F(\phi,\psi)\quad\forall(\phi,\psi)\in \mathcal{W},
\end{equation}
where
\begin{equation}\label{3bili}
	\begin{split}
	a(u,v;\phi,\psi)=&\int_{0}^{T}\int_{\Omega}\partial_tu\phi\ d\Omega dt+\int_{0}^{T}\int_{\Omega}\nabla v\nabla\phi\ d\Omega dt+\int_{0}^{T}\int_{\Omega}v\phi\ d\Omega dt\\
	&+\int_{0}^{T}\int_{\Omega}\nabla u\nabla \psi\ d\Omega dt-\int_{0}^{T}\int_{\Omega}v\psi\ d\Omega dt
	\end{split}
\end{equation}  and
\begin{equation}
	F(\phi,\psi)=\int_{0}^{T}\int_{\Omega}f\phi\ d\Omega dt.
\end{equation}

The unique solvability of the variational formulation \eqref{3pro1bf} can be investigated by the Banach-Ne\u{c}as-Babu\u{s}ka theorem which is stated below.
  \begin{theorem}\cite{Ern2004}\label{3BNB}
 	Let $Z$ be a Banach space and $U$ be a reflexive Banach space. Let $a:Z\times U\rightarrow\mathbb{R}$ be a bounded bilinear form and $f\in U^*$. Then there exists a unique $u\in Z$ satisfying $a(u,v)=f(v) \ \forall v\in U$ if and only if the following two conditions hold:
 	\begin{enumerate}[BNB1] 
 		\item (inf-sup stability): There exists $\beta>0$ such that, $\forall v\in Z,\quad\displaystyle \sup_{w\in U}\frac{a(v,w)}{\vert\vert w\vert\vert_U}\geq \beta\vert\vert v\vert\vert_Z$.\\
 		\item (injectivity): For $w\in U$, $(\forall v\in Z,\  a(v,w)=0)\Rightarrow (w=0).$ 
 	\end{enumerate}
    Moreover, the solution $u$ satisfies $\vert\vert u\vert\vert_Z\leq \frac{1}{\beta}\vert\vert f\vert\vert_{U^*}$.
 \end{theorem}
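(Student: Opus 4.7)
The plan is to recast the variational problem in operator form and reduce the claim to the bijectivity of a bounded linear map together with a quantitative inverse bound. I would define $A:Z\to U^*$ by $\langle Av,w\rangle_{U^*,U}=a(v,w)$ for $v\in Z$, $w\in U$. Boundedness of $a$ gives that $A$ is bounded, and the variational equation $a(u,v)=f(v)\ \forall v\in U$ is equivalent to $Au=f$ in $U^*$. Hence the theorem reduces to showing that $A$ is a bijection from $Z$ onto $U^*$, with $\|A^{-1}\|\leq \beta^{-1}$.

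For the sufficiency direction (BNB1 and BNB2 imply existence, uniqueness, and the bound) I would argue as follows. Condition BNB1 rewrites as $\|Av\|_{U^*}\geq \beta\|v\|_Z$, which immediately yields injectivity of $A$ and, by a standard Cauchy-sequence argument transported through $A$, closedness of its range $R(A)$ in $U^*$. To upgrade closed range to full surjectivity, I would proceed by contradiction: if $R(A)\subsetneq U^*$, Hahn-Banach supplies a nonzero continuous linear functional on $U^*$ vanishing on $R(A)$. Reflexivity of $U$ then identifies this functional with some $w\in U$, $w\neq 0$, satisfying $\langle Av,w\rangle = a(v,w)=0$ for every $v\in Z$. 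BNB2 forces $w=0$, a contradiction. Therefore $A$ is bijective, and the bound $\|u\|_Z\leq \beta^{-1}\|f\|_{U^*}$ follows immediately from BNB1 applied to $u$ together with $Au=f$.

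For the necessity direction (existence and uniqueness for every $f\in U^*$ imply BNB1 and BNB2), bijectivity of $A$ combined with the open mapping theorem on the Banach spaces $Z$ and $U^*$ yields boundedness of $A^{-1}$, which is exactly BNB1 with $\beta=\|A^{-1}\|^{-1}$. Condition BNB2 then becomes a direct consequence of surjectivity: if $a(v,w)=\langle Av,w\rangle=0$ for every $v\in Z$, then $\langle g,w\rangle_{U^*,U}=0$ for every $g\in U^*$, so $w=0$.

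The main obstacle will be the Hahn-Banach plus reflexivity step in proving surjectivity; this is where the assumption that $U$ is reflexive is used essentially. Without it, the annihilating functional provided by Hahn-Banach would only live in $U^{**}$, and BNB2, which is stated in terms of elements of $U$, would not suffice to produce the contradiction. Everything else is standard functional-analytic book-keeping (boundedness of $A$, closed range from a lower bound, and the open mapping theorem).
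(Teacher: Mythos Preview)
The paper does not prove this theorem; it is merely quoted from \cite{Ern2004} as a known tool and used as a black box in the subsequent analysis. Your proposed argument is correct and is precisely the standard proof of the Banach--Ne\u{c}as--Babu\u{s}ka theorem as presented in that reference: rewriting the problem as $Au=f$ with $A:Z\to U^*$, using BNB1 for injectivity and closed range, then Hahn--Banach together with reflexivity of $U$ and BNB2 for surjectivity, and the open mapping theorem for the converse direction.
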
 
\par  To assert the inf-sup stability condition, we will exploit an important inequality given in the following lemma:
\begin{lemma}\cite{inequref}
	For $u\in V$, the following inequality holds\begin{equation}\label{3ineq1}
		\frac{\vert\vert u\vert\vert_W^2}{\vert\vert u\vert\vert_V^2}\geq\frac{1}{6}.
	\end{equation}
\end{lemma}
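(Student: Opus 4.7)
The inequality $\|u\|_W^2/\|u\|_V^2 \ge 1/6$ is equivalent to showing $\|\partial_t u\|_{L^2(H^{-1}(\Omega))}^2 \le 5\,\|u\|_W^2$ for $u\in V$, i.e., that the dual-time-derivative norm is controlled by the spatial $L^2(H_0^1)$-norm. Such a Hardy-type control is tenable only because the homogeneous initial condition $u(\cdot,0)=0$ is built into $V$; for general functions in $L^2(H_0^1)$ with $\partial_t u\in L^2(H^{-1})$ no such bound can hold, since one can oscillate in time without affecting $\|u\|_W$.

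My plan is to attack the equivalent inequality by a duality argument. I would start from the characterization
\begin{equation*}
\|\partial_t u\|_{L^2(H^{-1}(\Omega))}=\sup_{\phi\in W\setminus\{0\}}\frac{\int_0^T\langle\partial_t u(t),\phi(t)\rangle\,dt}{\|\phi\|_W},
\end{equation*}
and consider the pointwise-in-time Riesz representative $\phi(t)\in H_0^1(\Omega)$ defined by $-\Delta\phi(t)=\partial_t u(t)$, which satisfies $\|\phi(t)\|_{H_0^1}=\|\partial_t u(t)\|_{H^{-1}}$ and realizes the supremum. An integration by parts in $t$, justified by density of smooth functions in $W$ together with the regularity $u\in C([0,T];L^2(\Omega))$ (embedding), then trades the time derivative on $u$ for a derivative on $\phi$ plus the single boundary contribution $\langle u(T),\phi(T)\rangle_{L^2(\Omega)}$ at $t=T$, the term at $t=0$ vanishing by the initial condition.

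The final step would be to absorb each resulting contribution by Cauchy--Schwarz in space-time and a carefully tuned Young's inequality. The boundary piece at $t=T$ can be handled using the identity $\|u(T)\|_{L^2(\Omega)}^2=2\int_0^T\langle\partial_t u,u\rangle\,dt$, which together with Cauchy--Schwarz gives $\|u(T)\|_{L^2(\Omega)}^2\le 2\,\|\partial_t u\|_{L^2(H^{-1})}\,\|u\|_W$; rebalancing then puts $\|\partial_t u\|_{L^2(H^{-1})}^2$ on one side and $\|u\|_W^2$ on the other.

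The principal obstacle I anticipate is producing the exact constant $1/6$. A direct duality argument typically yields a constant involving the Poincar\'e constant of $\Omega$ or $T$, so pinning the clean, universal factor $5$ in $\|\partial_t u\|_{L^2(H^{-1})}^2\le 5\|u\|_W^2$ demands careful bookkeeping of the Young's inequality parameters. For this sharpness, I would rely on the exact combination of estimates established in \cite{inequref} rather than re-derive it from scratch.
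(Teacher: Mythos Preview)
The paper does not supply a proof of this lemma; it is quoted from \cite{inequref} and used as a black box in the subsequent inf--sup argument. There is therefore no argument in the paper for you to compare against.

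More importantly, your central premise contains a genuine gap. You write that the control $\|\partial_t u\|_{L^2(H^{-1})}^2 \le 5\|u\|_W^2$ is ``tenable only because the homogeneous initial condition $u(\cdot,0)=0$ is built into $V$,'' suggesting that the initial condition rules out time oscillation. It does not. Take $u(x,t)=\sin(n\pi t/T)\,\phi(x)$ for any fixed nonzero $\phi\in H_0^1(\Omega)$. This function satisfies $u(\cdot,0)=0$ and belongs to $V$, yet
\[
\|u\|_W^2=\tfrac{T}{2}\|\phi\|_{H_0^1(\Omega)}^2
\qquad\text{while}\qquad
\|\partial_t u\|_{L^2(H^{-1}(\Omega))}^2=\tfrac{n^2\pi^2}{2T}\|\phi\|_{H^{-1}(\Omega)}^2,
\]
so $\|u\|_W^2/\|u\|_V^2\to 0$ as $n\to\infty$. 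No universal lower bound such as $1/6$ can hold with the norms as defined in this paper. Your integration-by-parts step is also formally problematic (it asks for $\partial_t\phi=(-\Delta)^{-1}\partial_{tt}u$, which is not available for $u\in V$), but that is secondary: even if it were justified, the chain of Cauchy--Schwarz and Young inequalities you outline cannot close, because the target inequality is false in this setting.

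The obstacle you flag at the end --- pinning down the exact constant $1/6$ --- is therefore not a matter of bookkeeping. The stated inequality must rely on a different normalization of the norms or of the space $V$ than the one written here; to produce a correct proof you would have to go back to \cite{inequref} and identify precisely what is proved there and under which definitions, rather than attempt a self-contained argument based on the definitions in this paper.
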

Now in the following lemma, we prove the inf-sup stability condition for the bilinear form \eqref{3bili}.
\begin{lemma}\label{3continfsup}
	For any $(u,v)\in\mathcal{V}$, we have
	\begin{equation*}
		\displaystyle \sup_{0\neq (\phi,\psi)\in\mathcal{W}} \frac{a(u,v;\phi,\psi)}{\vert\vert(\phi,\psi)\vert\vert_\mathcal{W}}\geq C_s\vert\vert(u,v)\vert\vert_{\mathcal{V}},
	\end{equation*}
where $C_s$ is a constant that does not depend on $(u,v)$.
\end{lemma}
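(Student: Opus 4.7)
The plan is to apply the Banach-Ne\u{c}as-Babu\u{s}ka strategy by constructing, for each $(u,v) \in \mathcal{V}$, an explicit test pair $(\phi_*, \psi_*) \in \mathcal{W}$ such that
\begin{equation*}
a(u,v;\phi_*,\psi_*) \;\geq\; c_1\,\|(u,v)\|_\mathcal{V}^2 \quad\text{and}\quad \|(\phi_*,\psi_*)\|_\mathcal{W} \;\leq\; c_2\,\|(u,v)\|_\mathcal{V},
\end{equation*}
from which the claim follows with $C_s = c_1/c_2$. The positive constants $c_1,c_2$ are allowed to depend on the Poincar\'e constant $C_P$ of $\Omega$; inequality \eqref{3ineq1} is used to pass from $V$-norms to $W$-norms, namely $\|\partial_t u\|_{L^2(H^{-1}(\Omega))}^2 \leq 5\,\|u\|_W^2$ and $\|(u,v)\|_\mathcal{V}^2 \leq 6(\|u\|_W^2 + \|v\|_W^2)$, so it suffices to bound $a(u,v;\phi_*,\psi_*)$ below by $\|u\|_W^2 + \|v\|_W^2$ up to a constant.

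The test functions are built from a Riesz representative $w_u \in W$ of $\partial_t u \in L^2(H^{-1}(\Omega))$, characterised by
\begin{equation*}
\int_0^T\!\!\int_\Omega \nabla w_u\cdot\nabla\phi\,d\Omega\,dt \;=\; \int_0^T \langle \partial_t u,\phi\rangle_{H^{-1},H_0^1}\,dt \qquad \forall\,\phi\in W,
\end{equation*}
which satisfies $\|w_u\|_W = \|\partial_t u\|_{L^2(H^{-1}(\Omega))}$ and $\int_0^T\!\int_\Omega \partial_t u\,w_u\,d\Omega\,dt = \|\partial_t u\|^2_{L^2(H^{-1}(\Omega))}$. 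I will take $\phi_* = v + \eta\,w_u$ and $\psi_* = \gamma u + \eta\,w_u$ for small parameters $\eta,\gamma>0$ to be fixed below. In the expansion of $a(u,v;\phi_*,\psi_*)$ the two $\eta\!\int v\,w_u$ contributions (one from $\int v\phi_*$, the other from $-\int v\psi_*$) cancel, while $\eta\!\int\nabla u\cdot\nabla w_u = \tfrac{\eta}{2}\|u(\cdot,T)\|_{L^2(\Omega)}^2 \geq 0$, leaving the four positive contributions $\eta\|\partial_t u\|^2_{L^2(H^{-1}(\Omega))}$, $\|v\|_W^2$, $\|v\|_{L^2(L^2)}^2$, $\gamma\|u\|_W^2$ together with the sign-indeterminate cross terms $\int\partial_t u\,v$, $\eta\int\nabla v\cdot\nabla w_u$, and $-\gamma\!\int uv$.

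Each cross term is then controlled by Cauchy-Schwarz (and Poincar\'e, where $L^2(L^2)$-factors involving $w_u$ or $u,v$ appear) and split by Young's inequality into pieces absorbable into the four positive contributions. The critical step is to split $-\gamma\!\int uv$ against the $\|v\|_{L^2(L^2)}^2$ reservoir via $|\int uv| \leq \|u\|_{L^2(L^2)}\|v\|_{L^2(L^2)}$ with Young weight calibrated to $C_P$, which transfers half of the $\gamma\|u\|_W^2$ contribution and a fraction of $\|v\|_{L^2(L^2)}^2$, while leaving the $\|v\|_W^2$ coefficient untouched; the residual $w_u$-cross terms are then absorbed into $\eta\|\partial_t u\|_{L^2(H^{-1})}^2$ by taking $\eta$ small. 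Fixing first $\gamma$ small enough (depending on $C_P$) and then $\eta$ small enough yields $a(u,v;\phi_*,\psi_*) \geq c_1(\|\partial_t u\|_{L^2(H^{-1})}^2 + \|u\|_W^2 + \|v\|_W^2) \geq (c_1/6)\|(u,v)\|_\mathcal{V}^2$, while the triangle inequality together with $\|w_u\|_W \leq \sqrt 5\,\|u\|_W$ gives $\|(\phi_*,\psi_*)\|_\mathcal{W} \leq c_2\|(u,v)\|_\mathcal{V}$ for a suitable $c_2$.

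The main obstacle is precisely the cross term $\int uv$: its only uniform bound is $\|u\|_{L^2}\|v\|_{L^2} \leq C_P^2\,\|u\|_W\|v\|_W$, so a direct Young split against $\|u\|_W^2 + \|v\|_W^2$ would render the $\|v\|_W^2$ coefficient negative whenever $C_P$ is not small. Routing the split instead through the automatically produced $\|v\|_{L^2(L^2)}^2$ reservoir, and synchronising $\gamma$ with $C_P$ so that every coefficient stays positive, is the delicate point; the order of the quantifiers ($\gamma$ first, then $\eta$, then the Young weights) has to be respected for the cascade of absorptions to close.
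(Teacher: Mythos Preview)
Your test pair $(\phi_*,\psi_*)=(v+\eta w_u,\;\gamma u+\eta w_u)$ does not close for the stated reason that ``the residual $w_u$-cross terms are absorbed by taking $\eta$ small''. Expanding $a(u,v;\phi_*,\psi_*)$ gives, after the $\eta\!\int v w_u$ cancellation you note,
\[
\eta\|w_u\|_W^2+\|v\|_W^2+\|v\|_{L^2(L^2)}^2+\gamma\|u\|_W^2+\tfrac{\eta}{2}\|u(T)\|^2
+(1+\eta)\!\int\!\nabla w_u\!\cdot\!\nabla v-\gamma\!\int uv,
\]
because $\int\partial_t u\,v=\int\nabla w_u\!\cdot\!\nabla v$ by the Riesz identity. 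The $w_u$-cross term therefore carries the coefficient $1+\eta$, not $\eta$; it does \emph{not} become small when $\eta$ is small. After Cauchy--Schwarz, the relevant quadratic form in $(\|w_u\|_W,\|v\|_W)$ is $\eta x^2+y^2-(1+\eta)xy$, whose determinant is $-\tfrac14(1-\eta)^2\le0$; it is never positive definite. Borrowing $O(\gamma)\|w_u\|_W^2$ from $\gamma\|u\|_W^2$ via $\|w_u\|_W^2\le5\|u\|_W^2$ only shifts the coefficient of $x^2$ to $\eta+O(\gamma)$, and for small $\eta,\gamma$ the determinant stays negative. So with the parameter regime you specify, the absorption cascade cannot start: no Young split of $(1+\eta)\|w_u\|_W\|v\|_W$ fits under $\eta\|w_u\|_W^2$ and a proper fraction of $\|v\|_W^2$.

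The paper's construction repairs exactly this deficit by adding a \emph{large} multiple of $u$ to \emph{both} test functions, $\phi=6\alpha u+w_u+v$ and $\psi=6\alpha u+w_u-6\alpha v$. The $6\alpha u$ in $\psi$ produces $6\alpha\|u\|_W^2$ via $\int\nabla u\nabla\psi$, and inequality \eqref{3ineq1} converts this into $\alpha(\|w_u\|_W^2+\|u\|_W^2)$, supplying the missing order-one $\|w_u\|_W^2$ needed to absorb the $2\!\int\nabla w_u\!\cdot\!\nabla v$ cross term. The simultaneous $6\alpha u$ in $\phi$ and $-6\alpha v$ in $\psi$ are there precisely to cancel the new cross terms $\int\nabla v\nabla u$ and $\int vu$ that this modification would otherwise introduce. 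Your ansatz is missing both ingredients; without them the lower bound fails.
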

\begin{proof}
For $u\in V$, we define $w_u$ to be the unique solution of the problem 
\begin{equation*}
	\int_{0}^{T}\int_{\Omega}\partial_tu\phi\ d\Omega dt=\int_{0}^{T}\int_{\Omega}\nabla w_u\nabla \phi\ d\Omega dt\quad \forall\phi\in W.
\end{equation*}
As proven in \cite{stfempara}, we have
\begin{equation*}
	\vert\vert w_u\vert\vert_W=\vert\vert\partial_t u\vert\vert_{L^2(H^{-1}(\Omega))}.
\end{equation*}
So, we can write
\begin{equation*}
    \nv{u}^2=\nw{w_u}^2+\nw{u}^2.
\end{equation*}
	Let $\alpha>0$ (which we will fix later). We take
	\begin{equation*}
		\phi=6\alpha u+w_u+v
	\end{equation*}
	and
	\begin{equation*}
		\psi=6\alpha u+w_u-6\alpha v.
	\end{equation*}
Clearly, $\phi,\psi\in W$. \\
Now, 
\begin{equation*}
	\begin{split}
		\vert\vert\phi\vert\vert_W^2&\leq2((6\alpha\vert\vert u\vert\vert_W+\vert\vert w_u\vert\vert_W)^2+\vert\vert v\vert\vert_W^2)\\
		&\leq 4(36\alpha^2\vert\vert u\vert\vert_W^2+\vert\vert w_u\vert\vert_W^2)+2\vert\vert v\vert\vert_W^2\\
		&\leq\text{max}\{144\alpha^2,4,2\}(\vert\vert u\vert\vert_W^2+\vert\vert w_u\vert\vert_W^2+\vert\vert v\vert\vert_W^2).
	\end{split}
\end{equation*}
Similarly, we have
\begin{equation*}
	\vert\vert \psi\vert\vert_W^2\leq \text{max}\{144\alpha^2,4,72\alpha^2\}(\vert\vert u\vert\vert_W^2+\vert\vert w_u\vert\vert_W^2+\vert\vert v\vert\vert_W^2).
\end{equation*}
Then 
\begin{equation}\label{3lemma2ineq1}
	\vert\vert(\phi,\psi)\vert\vert_{\mathcal{W}}\leq C_1\vert\vert(u,v)\vert\vert_{\mathcal{V}},
\end{equation}
where $C_1$ is a constant depending on $\alpha$.\\
Now,
\begin{equation*}
	\begin{split}
		a(u,v;\phi,\psi)&
		=\int_{0}^{T}\int_{\Omega}\partial_tu(6\alpha u+w_u+v)\ d\Omega dt\\
		&+\int_{0}^{T}\int_{\Omega}\nabla v\nabla(6\alpha u+w_u+v)\ d\Omega dt\\
		&+\int_{0}^{T}\int_{\Omega}v(6\alpha u+w_u+v)\ d\Omega dt\\
		&+\int_{0}^{T}\int_{\Omega}\nabla u\nabla (6\alpha u+w_u-6\alpha v)\ d\Omega dt\\
		&-\int_{0}^{T}\int_{\Omega}v(6\alpha u+w_u-6\alpha v)\ d\Omega dt\\
		&\geq 3\alpha\vert\vert u(T)\vert\vert^2+\nw{w_u}^2+\int_{0}^{T}\int_{\Omega}\nabla w_u\nabla v\ d\Omega dt\\
		&+\int_{0}^{T}\int_{\Omega}\nabla v\nabla w_u\ d\Omega dt+\vert\vert v\vert\vert_W^2+6\alpha\vert\vert u\vert\vert_W^2+\frac{1}{2}\vert\vert u(T)\vert\vert^2.
 \end{split}
        \end{equation*}
        Using the inequality \eqref{3ineq1}, we have
        \begin{equation*}
            \begin{split}
       a(u,v;\phi,\psi) &\geq \vert\vert w_u\vert\vert_W^2+2\int_{0}^{T}\int_{\Omega}\nabla w_u\nabla v\ d\Omega dt+ \vert\vert v\vert\vert_W^2+\alpha\vert\vert u\vert\vert _W^2+\alpha\vert\vert w_u\vert\vert_W^2\\
		&\geq \alpha\vert\vert u\vert\vert_W^2+(1+\alpha)\vert\vert w_u\vert\vert_W^2+\vert\vert v\vert\vert_W^2-\frac{1}{2}\vert\vert v\vert\vert_W^2-2\vert\vert w_u\vert\vert_W^2.      
            \end{split}
        \end{equation*}
\
Choosing $\alpha=2$, we obtain
\begin{equation*}
	\begin{split}
		a(u,v;\phi,\psi)&\geq2\vert\vert u\vert\vert_W^2+\nw{w_u}^2+\frac{1}{2}\vert\vert v\vert\vert_W^2\\
		&\geq\frac{1}{2}\vert\vert(u,v)\vert\vert_{\mathcal{V}}^2.
	\end{split}
\end{equation*}
From \eqref{3lemma2ineq1}, we get
\begin{equation*}
	a(u,v;\phi,\psi)\geq C_s\vert\vert (\phi,\psi)\vert\vert_{\mathcal{W}}\vert\vert(u,v)\vert\vert_{\mathcal{V}},
\end{equation*}
where $C_s$ is a constant.\\
Then
\begin{equation*}
	\displaystyle \sup_{0\neq (\phi,\psi)\in\mathcal{W}} \frac{a(u,v;\phi,\psi)}{\vert\vert(\phi,\psi)\vert\vert_\mathcal{W}}\geq C_s\vert\vert(u,v)\vert\vert_{\mathcal{V}},
\end{equation*}
which completes the proof.
\end{proof}
We now prove the injectivity condition which is required in the BNB Theorem \ref{3BNB}.
\begin{lemma}
Let $(\phi,\psi)\in\mathcal{W}$ be such that $a(u,v;\phi,\psi)=0\quad\forall (u,v)\in\mathcal{V}$. Then $\phi=\psi=0$.
\end{lemma}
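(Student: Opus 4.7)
The plan is to exploit the two natural test choices available in $\mathcal{V}$ to decouple the hypothesis into a pair of relations between $\phi$ and $\psi$, and then reduce the question to uniqueness for a backward-in-time parabolic equation with homogeneous data.

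First, setting $u=0$ and letting $v$ range over $W$ reduces the hypothesis to
\begin{equation*}
\int_0^T\!\!\int_\Omega \bigl(\nabla v\cdot\nabla\phi+v\phi-v\psi\bigr)\,d\Omega\,dt=0 \quad \forall v\in W,
\end{equation*}
which says, a.e.\ in $t$, that $\phi(\cdot,t)\in H_0^1(\Omega)$ weakly solves $-\Delta\phi+\phi=\psi$. In particular $\psi$ is completely determined by $\phi$, and elliptic regularity on the convex polygon $\Omega$ upgrades $\phi$ to $L^2(H^2\cap H_0^1)$ with $\Delta\phi=0$ on $\Sigma_T$ (since $\phi=\psi=0$ there).

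Second, setting $v=0$ and letting $u$ range over $V$ yields
\begin{equation*}
\int_0^T\!\!\int_\Omega \bigl(\partial_t u\,\phi+\nabla u\cdot\nabla\psi\bigr)\,d\Omega\,dt=0 \quad \forall u\in V.
\end{equation*}
I would test with separated fields $u(\x,t)=\alpha(t)\beta(\x)$ with $\beta\in H_0^1(\Omega)$, $\alpha\in H^1(0,T)$, $\alpha(0)=0$. Choosing $\alpha\in C_c^\infty(0,T)$ and interpreting the resulting identity distributionally in time identifies the weak backward relation $\partial_t\phi=-\Delta\psi$ in $L^2(H^{-1}(\Omega))$; subsequently allowing $\alpha(T)\neq0$ forces the terminal trace $\phi(T)=0$ in $L^2(\Omega)$.

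Eliminating $\psi$ via the elliptic relation collapses the system to the single backward-in-time fourth-order equation $\partial_t\phi-\Delta^2\phi+\Delta\phi=0$ on $Q_T$, with $\phi=\Delta\phi=0$ on $\Sigma_T$ and $\phi(T)=0$. Pairing with $\phi$ and integrating by parts twice (permissible since the elliptic upgrade gives $\Delta\phi\in L^2(H_0^1)$) yields $\tfrac{1}{2}\tfrac{d}{dt}\|\phi(\cdot,t)\|_{L^2(\Omega)}^2 =\|\Delta\phi\|_{L^2(\Omega)}^2+\|\nabla\phi\|_{L^2(\Omega)}^2\geq 0$, so $t\mapsto\|\phi(\cdot,t)\|_{L^2}^2$ is non-decreasing and vanishes at $T$, forcing $\phi\equiv 0$ and hence $\psi=-\Delta\phi+\phi=0$. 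The main obstacle is the second step: extracting both the backward PDE and the terminal condition $\phi(T)=0$ from a hypothesis that a priori yields only $\phi\in L^2(H_0^1)$ with no independent time regularity. A density argument in $V$ together with the duality pairing $\langle\partial_t u,\phi\rangle_{L^2(H^{-1}),L^2(H_0^1)}$ is needed to make the formal integration by parts rigorous; once this is accomplished, the energy estimate in the final step is immediate.
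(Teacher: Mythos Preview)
Your argument is correct but takes a substantially different route from the paper. The paper avoids any PDE derivation or regularity theory: it simply introduces the time-antiderivative $u_{\psi}(\x,t)=\int_0^t\psi(\x,s)\,ds\in V$ and tests with the single pair $(u,v)=(u_\psi,\phi)$. The five terms of $a(\cdot\,;\cdot)$ then collapse (the two mixed $\phi\psi$ terms cancel, and $\int\nabla u_\psi\cdot\nabla\psi=\tfrac12\|\nabla u_\psi(T)\|^2$) to a sum of nonnegative quantities including $\|\phi\|_W^2$, so $\phi=0$ immediately. A second test with $(u_\psi,-\psi)$ then kills $\psi$. By contrast, you decouple the hypothesis into an elliptic relation $-\Delta\phi+\phi=\psi$ and a backward evolution $\partial_t\phi=-\Delta\psi$, invoke $H^2$-regularity on the convex polygon, and run an energy argument on the adjoint fourth-order equation. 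What you gain is structural insight---you identify $(\phi,\psi)$ as a solution of the backward adjoint problem with terminal datum zero---at the cost of needing elliptic regularity and the somewhat delicate step of recovering $\partial_t\phi\in L^2(H^{-1})$ and $\phi(T)=0$ from the test-function identity. That step is fine (testing with tensor products $\alpha(t)\beta(\x)$, first with $\alpha\in C_c^\infty(0,T)$ and then with $\alpha(T)\neq0$, does the job without any density argument), but the paper's choice of $u_\psi$ sidesteps all of this in two lines.
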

\begin{proof}
Let $\phi,\ \psi\in W$. For $\x\in\Omega$ and $t\in(0,T)$, define
\begin{equation*}
	u_{\psi}(\x,t)=\int_{0}^{t}\psi(\x,s)\ ds.
\end{equation*}
Then by definition, we have $u_{\psi}\in V$ \cite{Langer2021multi1}.\\
Suppose that
\begin{equation*}
   a(u,v;\phi,\psi)=0\quad\forall(u,v)\in\mathcal{V}. 
\end{equation*}
By taking $u=u_{\psi}$ and $v=\phi$ in above, we have
\begin{equation*}
	a(u,v;\phi,\psi)=\intq\phi\psi\dx+\nw{\phi}^2+\vert\vert\phi\vert\vert_{L^2(L^2(\Omega))}^2+\frac{1}{2}\vert\vert\nabla u_{\psi}(\x,T)\vert\vert^2-\intq \phi\psi\dx.
\end{equation*}
Since $a(u,v;\phi,\psi)=0$, we get
\begin{equation*}
	\nw{\phi}^2+\vert\vert\phi\vert\vert_{L^2(L^2(\Omega))}+\frac{1}{2}\vert\vert\nabla u_{\psi}(\x,T)\vert\vert^2=0.
\end{equation*}
Then
\begin{equation*}
	\nw{\phi}^2=0.
\end{equation*}
Hence
\begin{equation*}
	\phi=0.
\end{equation*}
In particular, we can write $a(u,v;\phi,\psi)=a(u,v;0,\psi)=0\quad\forall (u,v)\in\mathcal{V}$.\\\\
Now, we choose $u=u_{\psi}$ and $v=-\psi$ in above equation to get\\
\begin{equation*}
	0=a(u,v;0,\psi)=\intq\nabla u_{\psi}\nabla \psi\dx+\intq\psi^2\dx.
\end{equation*}
Then 
\begin{equation*}
	\frac{1}{2}\vert\vert\nabla u_{\psi}(\x,T)\vert\vert^2+\vert\vert\psi\vert\vert_{L^2(L^2(\Omega))}^2=0.
\end{equation*}
Thus, $\psi=0$ which completes the proof.
\end{proof}
As a consequences of the BNB Theorem \ref{3BNB}, we can guarantee the unique solvability of \eqref{3pro1bf} and it is stated in the following theorem.
\begin{theorem}
For a given $f\in W^*$, there exists a unique $(u,v)\in\mathcal{V}$ satisfying
\begin{equation*}
	a(u,v;\phi,\psi)=F(\phi,\psi)\quad\forall(\phi,\psi)\in\mathcal{W}
\end{equation*}
and \begin{equation*}
    \nv{(u,v)}\leq \frac{1}{C_s}\vert\vert f\vert\vert_{W^*}.
\end{equation*}
\end{theorem}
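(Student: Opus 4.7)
The plan is to invoke the Banach--Ne\u{c}as--Babu\u{s}ka Theorem \ref{3BNB} with $Z=\mathcal{V}$ and $U=\mathcal{W}$, since essentially all the hypotheses have been prepared in the two preceding lemmas. First I would observe that $\mathcal{V}=V\times W$ and $\mathcal{W}=W\times W$ are reflexive Banach spaces under the product norms defined in the Notation paragraph, because $V$ and $W$ are Hilbert spaces (built from $L^2$-Bochner spaces with values in $H_0^1(\Omega)$ and $H^{-1}(\Omega)$), so the reflexivity requirement on $U$ in Theorem \ref{3BNB} is automatic.

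Next I would check that the bilinear form $a$ in \eqref{3bili} is bounded on $\mathcal{V}\times\mathcal{W}$ and that $F$ lies in $\mathcal{W}^*$. For the first term of $a$, I would interpret $\int_0^T\int_\Omega \partial_t u\,\phi\,d\Omega\,dt$ as the duality pairing $\langle \partial_t u,\phi\rangle_{L^2(H^{-1}),L^2(H_0^1)}$, giving the bound $\|\partial_t u\|_{L^2(H^{-1}(\Omega))}\|\phi\|_W\leq \nv{(u,v)}\|(\phi,\psi)\|_\mathcal{W}$. The remaining four terms are handled by Cauchy--Schwarz and the Poincar\'e inequality applied on the space-time cylinder, each term being controlled by a product of a $W$-norm of a component of $(u,v)$ and a $W$-norm of a component of $(\phi,\psi)$. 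For $F$, the estimate $|F(\phi,\psi)|\leq \|f\|_{W^*}\|\phi\|_W\leq \|f\|_{W^*}\|(\phi,\psi)\|_\mathcal{W}$ shows $F\in\mathcal{W}^*$ with $\|F\|_{\mathcal{W}^*}\leq \|f\|_{W^*}$.

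With these boundedness properties in hand, the two nontrivial hypotheses of Theorem \ref{3BNB} are precisely the inf-sup stability condition BNB1 and the injectivity condition BNB2, which have already been established in Lemma \ref{3continfsup} and the subsequent lemma, respectively. Theorem \ref{3BNB} therefore yields a unique $(u,v)\in\mathcal{V}$ satisfying \eqref{3pro1bf}, together with the a priori estimate
\begin{equation*}
\nv{(u,v)}\leq \frac{1}{C_s}\|F\|_{\mathcal{W}^*}\leq \frac{1}{C_s}\|f\|_{W^*},
\end{equation*}
which is exactly the claimed bound.

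Since the heavy lifting has been done in the two prior lemmas, there is no serious obstacle here; the only care needed is in the boundedness verification, in particular correctly reading the temporal term as an $H^{-1}$--$H_0^1$ duality pairing rather than a pointwise integral, so that it is controlled by $\|\partial_t u\|_{L^2(H^{-1})}$ and therefore by $\nv{u}$.
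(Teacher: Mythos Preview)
Your proposal is correct and follows exactly the approach of the paper, which simply records the theorem as a direct consequence of the Banach--Ne\u{c}as--Babu\u{s}ka Theorem~\ref{3BNB} together with the two preceding lemmas. In fact you supply more detail than the paper does, by explicitly checking reflexivity of $\mathcal{W}$, boundedness of $a$ on $\mathcal{V}\times\mathcal{W}$, and the bound $\|F\|_{\mathcal{W}^*}\leq\|f\|_{W^*}$, all of which the paper leaves implicit.
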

\begin{remark}
The initial condition can be treated as a Dirichlet condition on the space-time cylinder $\Omega\times(0,T)$. For non-homogeneous initial condition $u(\x,0)=u_0(\x)$, we can obtain the homogeneous variational formulation as: Find $(\overline{u},v)\in\mathcal{V}$ such that,
\begin{equation*}
	a(\overline{u},v;\phi,\psi)=F(\phi,\psi)-\int_{0}^{T}\int_{\Omega}\partial_t\overline{u}_0\phi\ d\Omega dt-\int_{0}^{T}\int_{\Omega}\nabla\overline{u}_0\nabla\psi\ d\Omega dt\quad \forall (\phi,\psi)\in\mathcal{W},
\end{equation*}
where we take $u=\overline{u}+\overline{u}_0$ and $\overline{u}_0\in\{w\in L^2(H_0^1(\Omega)):\partial_tw\in L^2(H^{-1}(\Omega))\}$ is an extension of $u_0\in H_0^1(\Omega)$.
\end{remark}
\section{Discrete formulation and error analysis}\label{3sec4dis}
This section starts with a brief introduction about B-Spline basis functions and the isogeometric space that is needed for the discrete analysis. 
\subsection{B-spline spaces}
For two positive integers $l$ and $p$, a set $\Xi:=\{0=\xi_1,\cdots,\xi_{l+p+1}=1\}$  with $\xi_1\leq\cdots\leq\xi_{l+p+1}$ is called a knot vector in [0,1]. In our analysis, open knot vectors are considered, i.e. we assume that $\xi_1=\cdots=\xi_{p+1}=0$ and $\xi_l=\cdots=\xi_{l+p+1}=1$. Using the Cox-De Boor formula \cite{nurbsbook}, the B-spline basis functions can be defined recursively. We first define the B-splines of zero degree as follows:
\begin{equation*}
	\wh{b}_{j,0}(\zeta)=\begin{cases}
		1 \mbox{ if }\xi_j\leq \zeta<\xi_{j+1},\\
		0 \mbox{ otherwise. }
	\end{cases}
\end{equation*}
Then the higher degree B-splines are defined as:
\begin{equation*}
	\wh{b}_{j,p}(\zeta)=\frac{\zeta-\xi_j}{\xi_{j+p}-\xi_{j}}\wh{b}_{j,p-1}(\zeta)+\frac{\xi_{j+p+1}-\zeta}{\xi_{j+p+1}-\xi_{j+1}}\wh{b}_{j+1,p-1}(\zeta).
\end{equation*}
Here, the division by $0$ is taken to be $0$. Let $h$ denote the mesh size, i.e. $h:=\mbox{max}\{|\xi_{j+1}-\xi_{j}|:1\leq j\leq l\}$. Hence, the univariate spline space can be defined as
\begin{equation*}
	\wh{\mathcal{S}}_{h,p}:=\mbox{span}\{\wh{b}_{j,p}:1\leq j\leq l\}.
\end{equation*}
 We take B-spline functions that depend on both time and space where the spatial domain is of dimension $d$. Given integers $l_r,p_r$ for $1\leq r\leq d$ and $l_t,p_t$, we consider $d+1$ knot vectors $\Xi_r=\{\xi_{r,1},\cdots,\xi_{r,l_r+p_r+1}\}$ and $\Xi_t=\{\xi_{t,1},\cdots,\xi_{t,l_t+p_t+1}\}$. Let $h_r$ be the mesh size of the knot vector $\Xi_r$ for $r=1,\cdots,d$. Then we take $h_s=\mbox{max}\{h_r:1\leq r\leq d\}$ and by $h_t$ we denote the mesh size in time direction. The following quasi-uniformity assumption is satisfied by the knot vectors.\\\\
\textbf{Assumption 1.} If span $(\xi_{r,i},\xi_{r,i+1})$ is a non-empty knot  of $\Xi_r$, then we have $\beta h_s\leq \xi_{r,i+1}-\xi_{r,i}\leq h_s$ for $r=1,\cdots,d$ and if $(\xi_{t,i},\xi_{t,i+1})$ is a non-empty knot span of $\Xi_t$, then we have $\beta h_t\leq \xi_{t,i+1}-\xi_{t,i}\leq h_t$, where $0<\beta\leq1$ is independent of $h_s$ and $h_t$.\\\\
We define the multivariate B-splines on the parametric domain $\wh{\Omega}:=(0,1)^d$ by taking the tensor product of the univariate B-splines as follows:
\begin{equation*}
	\wh{B}_{\mathbf{i_s},\mathbf{p_s}}(\mathbf{\zeta}):=\wh{b}_{i_1,p_1}(\zeta_1)\cdots\wh{b}_{i_d,p_d}(\zeta_d),
\end{equation*}
where $\mathbf{\zeta}=(\zeta_1,\cdots,\zeta_d)$, $\mathbf{i_s}=(i_1,\cdots,i_d)$, $\mathbf{p_s}=(p_1,\cdots,p_d)$. In our analysis, we fix $p_1=\cdots=p_d=p_s$. The B-spline basis function on $\wh{\Omega}\times(0,1)$ is defined by
\begin{equation*}
	\wh{B}_{\mathbf{i},\mathbf{p}}(\mathbf{\zeta},\tau):=\wh{B}_{\mathbf{i_s},\mathbf{p_s}}(\zeta)\wh{b}_{i_t,p_t}(\tau),
\end{equation*}
where $\mathbf{i}=(\mathbf{i_s},i_t)$ and $\mathbf{p}=(\mathbf{p_s},p_t)$.
Then the multivariate spline space is given as
\begin{equation*}
	\wh{\mathcal{S}}_{h,\mathbf{p}}:=\mbox{span}\{	\wh{B}_{\mathbf{i},\mathbf{p}}:1\leq i_r\leq l_r, \mbox{ for } r=1,\cdots,d \mbox{ and }1\leq i_t\leq l_t\},
\end{equation*}
where $h=\mbox{max}\{h_s,h_t\}$. Since the spline spaces have tensor product structure, we have that
\begin{equation*}
	\wh{\mathcal{S}}_{h,\mathbf{p}}=\wh{\mathcal{S}}_{h_s,\mathbf{p_s}}\otimes\wh{\mathcal{S}}_{h_t,p_t},
\end{equation*}
where $\wh{\mathcal{S}}_{h_s,\mathbf{p_s}}=\mbox{span}\{\wh{B}_{\mathbf{i_s},\mathbf{p_s}}:1\leq i_r\leq l_r \mbox{ for } r=1,\cdots,d\}$.\\\\
\textbf{Assumption 2:} Here, $p_s,p_t\geq 1$ so that $\wh{\mathcal{S}}_{h_s,\mathbf{p_s}}\subset C^0(\wh{\Omega})$ and $\wh{\mathcal{S}}_{h_t,{p_t}}\subset C^0((0,1))$.\\
\par Let $\mathbf{G}:\wh{\Omega}\rightarrow\Omega$ be a mapping satisfying the assumption that $\mathbf{G}^{-1}$ has derivatives of any order and they are  piecewise bounded. Let $\mathbf{G}(\mathbf{\zeta})=\x$, where $\x=(x_1,\cdots,x_d)$ and let $t:=T\tau$. The space-time cylinder is parameterized using the function $\tilde{\mathbf{G}}:\wh{\Omega}\times(0,1)\rightarrow\Omega\times(0,T)$, such that $\tilde{\mathbf{G}}(\mathbf{\zeta},\tau):=(\mathbf{G}(\mathbf{\zeta}),T\tau)=(\x,t)$.\\
To impose the initial and boundary conditions, we consider 
\begin{equation*}
	\wh{V}_h=\{\wh{\phi}_h\in\wh{\mathcal{S}}_{h,\mathbf{p}}:\wh{\phi}_h=0\mbox{ on }\partial\wh{\Omega}\times(0,1)\mbox{ and }\wh{\phi}_h=0\mbox{ in }\wh{\Omega}\times{0}\}.
\end{equation*}
Let \begin{equation*}
\wh{V}_{h_s}=\{\wh{\phi}\in\wh{\mathcal{S}}_{h_s,\mathbf{p_s}}:\wh{\phi}=0\mbox{ on }\partial\wh{\Omega}\}=\mbox{span}\{\wh{b}_{j_1,p_s}\cdots\wh{b}_{j_d,p_s}:2\leq j_r\leq l_r-1;r=1,\cdots,d\}
\end{equation*}
and
\begin{equation*}
	\wh{V}_{h_t}=\{\wh{\phi}\in\wh{\mathcal{S}}_{h_t,p_t}:\wh{\phi}(0)=0\}=\mbox{span}\{\wh{b}_{j_t,p_t}:2\leq j_t\leq l_t\}.
\end{equation*}
Then we can write $\wh{V}_h=\wh{V}_{h_s}\otimes\wh{V}_{h_t}$.\\
By considering the colexicographical reordering of the degrees-of-freedom, we have
\begin{equation*}
	\wh{V}_{h_s}=\{\wh{B}_{j,\mathbf{p_s}}:j=1,\cdots,n_s\}
\end{equation*}
and
\begin{equation*}
	\wh{V}_{h_t}=\{\wh{b}_{j,p_t}:j=1,\cdots,n_t\},
\end{equation*}
where $n_s=\Pi_{r=1}^{d}(l_r-2)$ and $n_t=l_t-1$. Then 
\begin{equation*}
	\wh{V}_h=\mbox{span}\{\wh{B}_{j,\mathbf{p}}: j=1,\cdots, N\},
\end{equation*}
where, $N=n_sn_t$.\\
The isogeometric space can be defined using a push-forward of $\wh{V}_h$ through the parametarization $\tilde{\mathbf{F}}$ as 
\begin{equation}\label{3isvh}
	V_h:=\{B_{j,\mathbf{p}}=\wh{B}_{j,\mathbf{p}}\circ\tilde{\mathbf{F}}^{-1}:j=1,\cdots,N\}.
\end{equation}
We also have $V_h=V_{h_s}\otimes V_{h_t}$, where
\begin{equation*}
	V_{h_s}:=\{B_{j,\mathbf{p_s}}=\wh{B}_{j,\mathbf{p_s}}\circ\mathbf{F}^{-1}:j=1,\cdots,n_s\}
\end{equation*}
and
\begin{equation*}
	V_{h_t}:=\{b_{j,p_t}=\wh{b}_{j,p_t}(\cdot/T):j=1,\cdots,n_t\}.
\end{equation*}\\
\par To introduce the discrete space-time variational formulation, we consider the finite dimensional subspace $V_h\subset V$ given in \eqref{3isvh}, and set $W_h=V_h$. Consider the product spaces  $\mathcal{V}_h=V_h\times W_h$ and $\mathcal{W}_h=W_h\times W_h$. Then the isogeometric discretization of \eqref{3pro1st} reads : Find $(u_h,v_h)\in\mathcal{V}_h$ such that
\begin{equation}\label{3pro1dstf}
	\begin{split}
		\int_{0}^{T}\int_{\Omega}\partial_tu_h\phi_h\ d\Omega dt&+\int_{0}^{T}\int_{\Omega}\nabla v_h\nabla\phi_h\ d\Omega dt+\int_{0}^{T}\int_{\Omega}v_h\phi_h\ d\Omega dt\\
		&=\int_{0}^{T}\int_{\Omega}f\phi_h\ d\Omega dt\quad\forall \phi_h\in W_h,\\
		\int_{0}^{T}\int_{\Omega}\nabla u_h\nabla \psi_h\ d\Omega dt&=\int_{0}^{T}\int_{\Omega}v_h\psi_h\ d\Omega dt\quad\forall \psi_h\in W_h.
	\end{split}
\end{equation}
We observe that problem \eqref{3pro1dstf} is equivalent to the following variational problem: Find $(u_h,v_h)\in\mathcal{V}_h$ such that
\begin{equation}\label{3pro1dst}
	a(u_h,v_h;\phi_h,\psi_h)=F(\phi_h,\psi_h)\quad\forall (\phi_h,\psi_h)\in\mathcal{W}_h.
\end{equation}
Since $\mathcal{V}_h\subset\mathcal{V}$ and $\mathcal{W}_h\subset\mathcal{W}$, from \eqref{3pro1dst} and \eqref{3pro1bf} we get the following Galerkin orthogonality
\begin{equation}\label{3galor}
	a(u-u_h,v-v_h;\phi_h,\psi_h)=0\quad\forall (\phi_h,\psi_h)\in\mathcal{W}_h,
\end{equation} which plays an important role in proving the error estimates.
\par For $u\in V$, we define $w_{h,u}\in W_h$ as the solution of the discrete problem 
\begin{equation*}
	\intq\partial_t u\phi_h\dx=\intq\nabla w_{h,u}\nabla \phi_h\dx\quad\forall\phi_h\in W_h.
\end{equation*}
As given in \cite{stfempara}, we consider the following discrete norm 
\begin{equation}\label{3disnorm}
\nvh{u}^2=\nw{w_{h,u}}^2+\nw{u}^2
\end{equation}
and we can define the discrete norm on product space as
\begin{equation*}
    \vert\vert(u_h,v_h)\vert\vert_{\mathcal{V}_h}^2=\nvh{u_h}^2+\nw{v_h}^2.
\end{equation*}
We also have $\nvh{u}^2\leq\nv{u}^2$, and since $u_h\in V$ from \eqref{3ineq1} and \eqref{3disnorm} we have the following inequality
\begin{equation*}
	\frac{\nw{u_h}^2}{\nvh{u_h}^2}\geq\frac{1}{6}.
\end{equation*}
Similar to the continuous case, we prove the discerete inf-sup stability condition in the following lemma.
\begin{lemma}
For $(u_h,v_h)\in\mathcal{V}_h$, we have
\begin{equation}\label{3disinfsupin}
		\displaystyle \sup_{0\neq (\phi_h,\psi_h)\in\mathcal{W}_h} \frac{a(u_h,v_h;\phi_h,\psi_h)}{\vert\vert(\phi_h,\psi_h)\vert\vert_{\mathcal{W}_h}}\geq C_s\vert\vert(u_h,v_h)\vert\vert_{\mathcal{V}_h}.
\end{equation}
\end{lemma}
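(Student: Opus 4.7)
The plan is to mirror the continuous inf-sup proof of Lemma \ref{3continfsup} line-for-line, with $w_u$ replaced by the discrete representative $w_{h,u_h}\in W_h$ defined just above \eqref{3disnorm}. For a given $(u_h,v_h)\in\mathcal{V}_h$ and a parameter $\alpha>0$ to be fixed later, I would take
\begin{equation*}
\phi_h=6\alpha u_h+w_{h,u_h}+v_h,\qquad \psi_h=6\alpha u_h+w_{h,u_h}-6\alpha v_h.
\end{equation*}
Since $W_h=V_h$ is a linear subspace and each of $u_h$, $w_{h,u_h}$, $v_h$ already belongs to $W_h$, the pair $(\phi_h,\psi_h)$ lies in $\mathcal{W}_h$ automatically; no Fortin operator or discrete lifting is required. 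This is precisely the reason the continuous argument can be transferred to the discrete setting without loss.

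The first step is the upper bound $\|(\phi_h,\psi_h)\|_{\mathcal{W}_h}\le C_1\|(u_h,v_h)\|_{\mathcal{V}_h}$, which follows from the same triangle inequalities as in \eqref{3lemma2ineq1} together with the identity $\nvh{u_h}^2=\nw{w_{h,u_h}}^2+\nw{u_h}^2$ coming from \eqref{3disnorm}. The main step is then the lower bound $a(u_h,v_h;\phi_h,\psi_h)\ge \tfrac12\|(u_h,v_h)\|_{\mathcal{V}_h}^2$. Expanding term by term as in Lemma \ref{3continfsup}, three ingredients make the discrete computation identical to the continuous one: the defining relation for $w_{h,u_h}$ applied with the admissible choices $\phi_h=w_{h,u_h}$ and $\phi_h=v_h$ gives $\intq \partial_t u_h\, w_{h,u_h}\dx=\nw{w_{h,u_h}}^2$ and $\intq \partial_t u_h\, v_h\dx=\intq\nabla w_{h,u_h}\nabla v_h\dx$; integration by parts in time with $u_h(\cdot,0)=0$ yields $\intq \partial_t u_h\, u_h\dx=\tfrac12\|u_h(\x,T)\|^2$; and the discrete inequality $\nw{u_h}^2/\nvh{u_h}^2\ge 1/6$ stated just above the lemma lets the indefinite cross term $2\intq\nabla w_{h,u_h}\nabla v_h\dx$ be absorbed in the same way that \eqref{3ineq1} is used at the continuous level. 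All mixed $u_h$--$v_h$ and $v_h$--$w_{h,u_h}$ contributions cancel identically by the same symbolic algebra, so that the choice $\alpha=2$ delivers the lower bound.

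Combining the two estimates and dividing by $\|(\phi_h,\psi_h)\|_{\mathcal{W}_h}$ then gives \eqref{3disinfsupin} with an $h$-independent stability constant $C_s$ equal to the one from the continuous case. The only place where a genuine discrete obstacle could have appeared is the construction of the test pair: in typical space--time inf-sup arguments the Riesz representative of $\partial_t u_h$ is a continuous object, and one must invest in a Fortin-type operator to bring it back into the discrete space. Here $w_{h,u_h}$ is \emph{defined} inside $W_h=V_h$, so every building block of $\phi_h$ and $\psi_h$ is discrete by construction and the continuous computation transfers verbatim without any $h$-dependence in the constant. This is the only subtlety I would highlight in the write-up.
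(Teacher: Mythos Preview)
Your proposal is correct and follows the paper's own proof essentially verbatim: the paper also chooses $\phi_h=6\alpha u_h+w_{h,u_h}+v_h$ and $\psi_h=6\alpha u_h+w_{h,u_h}-6\alpha v_h$ with $\alpha=2$, derives the bound $\|(\phi_h,\psi_h)\|_{\mathcal{W}_h}\le C_1\|(u_h,v_h)\|_{\mathcal{V}_h}$, and obtains $a(u_h,v_h;\phi_h,\psi_h)\ge\tfrac12\|(u_h,v_h)\|_{\mathcal{V}_h}^2$ by repeating the continuous computation. Your explicit remark that $w_{h,u_h}\in W_h$ makes the test pair discrete without a Fortin operator is exactly the point that makes the transfer work, and your write-up is in fact more detailed than the paper's sketch.
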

\begin{proof}
The inequality \eqref{3disinfsupin} is proved following the same strategies used in the Lemma \ref{3continfsup}. We only outline the main steps here. Let $(u_h,v_h)\in\mathcal{V}_h$. We take $\alpha=2$ and consider
	\begin{equation*}
	\phi_h=6\alpha u_h+w_{h,u_h}+v_h
\end{equation*}
and
\begin{equation*}
	\psi_h=6\alpha u_h+w_{h,u_h}-6\alpha v_h.
\end{equation*}
Then we have
\begin{equation*}
\vert\vert(\phi_h,\psi_h)\vert\vert_{\mathcal{W}_h}\leq C_1\vert\vert(u_h,v_h)\vert\vert_{\mathcal{V}_h}.
\end{equation*}
As in the proof of Lemma \ref{3continfsup}, we get
\begin{equation*}
	a(u_h,v_h;\phi_h,\psi_h)\geq  \frac{1}{2}\vert\vert(u_h,v_h)\vert\vert_{\mathcal{V}_h}^2.
\end{equation*}
The inequality \eqref{3disinfsupin} follows as in the continuous case.
\end{proof}
The unique solvability of the space-time isogeometric scheme \eqref{3pro1dst} follows from the discrete inf-sup stability condition. 
\subsection{Discrete system}
The discrete problem \eqref{3pro1dstf} is equivalent to the matrix system  given by
\begin{equation}\label{3linsys}
	\begin{split}
		\mathbf{Wu}+ \mathbf{Kv}+\mathbf{Mv}&=\mathbf{f},\\
		\mathbf{Ku}&=\mathbf{Mv},
	\end{split}
\end{equation}
where
\begin{equation*}
	\begin{split}
		[\mathbf{W}]_{i,j}&=\int_{0}^{T}\int_{\Omega}\partial_tB_{j,\mathbf{p}}B_{i,\mathbf{p}}\ d\Omega dt,\\
		[\mathbf{K}]_{i,j}&=\int_{0}^{T}\int_{\Omega}\nabla B_{j,\mathbf{p}}\nabla B_{i,\mathbf{p}}\ d\Omega dt,\\
		[\mathbf{M}]_{i,j}&=\int_{0}^{T}\int_{\Omega}B_{j,\mathbf{p}}B_{i,\mathbf{p}}\ d\Omega dt\quad\mbox{for }i,j=1,\cdots,N
	\end{split}
\end{equation*}
and
\begin{equation*}
    [\mathbf{f}]_i=\int_{0}^{T}\int_{\Omega} fB_{i,\mathbf{p}}\ d\Omega dt\quad\mbox{for }i=1,\cdots,N.
\end{equation*}
Due to the 
tensor product structure of $V_h$, the above matrices can be written as a Kronecker product of matrices in space and time variables as follows \cite{LOLI20202586}:
\begin{equation*}
\mathbf{W}=W_t\otimes M_s,\quad \mathbf{K}=M_t\otimes K_s \quad\mbox{and}\quad\mathbf{M}=M_t\otimes M_s,
\end{equation*}
where 
\begin{equation*}
	[W_t]_{i,j}=\int_{0}^{T} b_{j,p_t}'b_{i,p_t}\ dt,\quad [M_t]_{i,j}=\int_{0}^{T}b_{j,p_t}b_{i,p_t}\ dt \mbox{ for }i,j=1,\cdots,n_t
\end{equation*}
and
\begin{equation*}
		[K_s]_{i,j}=\int_{\Omega}\nabla B_{j,\mathbf{p_s}}\nabla B_{i,\mathbf{p_s}}\ d\Omega,\quad
		[M_s]_{i,j}=\int_{\Omega} B_{j,\mathbf{p_s}} B_{i,\mathbf{p_s}}\ d\Omega\mbox{ for }i,j=1,\cdots,n_s.
\end{equation*}

\subsection{Error analysis}
In order to derive the a priori error estimates, we first need to prove the following Cea's-like lemma which is a consequence of the Galerkin orthogonality \eqref{3galor} and the discrete inf-sup stability condition \eqref{3disinfsupin}.
\begin{lemma}\label{3pro1cea}
	Let $(u,v)\in\mathcal{V}$ and $(u_h,v_h)\in\mathcal{V}_h$ be the solution of \eqref{3pro1bf} and \eqref{3pro1dst} respectively. Then
	\begin{equation*}
		\displaystyle\vert\vert(u-u_h,v-v_h)\vert\vert_{\mathcal{V}_h}\leq C\inf_{(z_h,w_h)\in\mathcal{V}_h}\vert\vert(u-z_h,v-w_h)\vert\vert_{\mathcal{V}}.
	\end{equation*}
\end{lemma}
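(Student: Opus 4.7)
The plan is a standard Cea-type argument combining triangle inequality, the Galerkin orthogonality \eqref{3galor}, and the discrete inf-sup condition \eqref{3disinfsupin}. The technical ingredient that needs a brief verification is boundedness of the bilinear form $a(\cdot,\cdot;\cdot,\cdot)$ on $\mathcal{V}\times\mathcal{W}$, together with the comparison $\nvh{\cdot}\leq\nv{\cdot}$ (which is noted just before the lemma and follows from the fact that $w_{h,u}$ is the Galerkin projection in $W_h$ of the continuous $w_u$ with respect to the $H^1_0$-inner product, hence $\nw{w_{h,u}}\leq\nw{w_u}=\|\partial_t u\|_{L^2(H^{-1})}$).

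First I would fix an arbitrary $(z_h,w_h)\in\mathcal{V}_h$ and split
$(u-u_h,v-v_h)=(u-z_h,v-w_h)+(z_h-u_h,w_h-v_h)$,
so that by the triangle inequality
\[
\vert\vert(u-u_h,v-v_h)\vert\vert_{\mathcal{V}_h}\leq \vert\vert(u-z_h,v-w_h)\vert\vert_{\mathcal{V}_h}+\vert\vert(z_h-u_h,w_h-v_h)\vert\vert_{\mathcal{V}_h}.
\]
The first term is immediately dominated by $\vert\vert(u-z_h,v-w_h)\vert\vert_{\mathcal{V}}$ using the comparison $\nvh{\cdot}\leq\nv{\cdot}$ in the first component and the identity $\nw{\cdot}$ in the second.

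For the second term, I would invoke the discrete inf-sup stability \eqref{3disinfsupin} applied to $(z_h-u_h,w_h-v_h)\in\mathcal{V}_h$:
\[
C_s\vert\vert(z_h-u_h,w_h-v_h)\vert\vert_{\mathcal{V}_h}\leq \sup_{0\neq(\phi_h,\psi_h)\in\mathcal{W}_h}\frac{a(z_h-u_h,w_h-v_h;\phi_h,\psi_h)}{\vert\vert(\phi_h,\psi_h)\vert\vert_{\mathcal{W}_h}}.
\]
Now I would use Galerkin orthogonality \eqref{3galor} to replace $(z_h-u_h,w_h-v_h)$ inside $a$ by $(z_h-u,w_h-v)$, i.e.
$a(z_h-u_h,w_h-v_h;\phi_h,\psi_h)=a(z_h-u,w_h-v;\phi_h,\psi_h)$
for every test pair in $\mathcal{W}_h$. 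Then boundedness of $a$ yields $|a(z_h-u,w_h-v;\phi_h,\psi_h)|\leq M\,\vert\vert(u-z_h,v-w_h)\vert\vert_{\mathcal{V}}\,\vert\vert(\phi_h,\psi_h)\vert\vert_{\mathcal{W}_h}$, so the supremum above is controlled by $M\,\vert\vert(u-z_h,v-w_h)\vert\vert_{\mathcal{V}}$.

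Collecting the two contributions gives
$\vert\vert(u-u_h,v-v_h)\vert\vert_{\mathcal{V}_h}\leq (1+M/C_s)\,\vert\vert(u-z_h,v-w_h)\vert\vert_{\mathcal{V}}$,
and taking the infimum over $(z_h,w_h)\in\mathcal{V}_h$ delivers the claim. The main obstacle is simply verifying the continuity constant $M$ of $a$: the only non-trivial term is $\intq\partial_t u\,\phi\,\dx$, which is controlled by $\|\partial_t u\|_{L^2(H^{-1})}\,\nw{\phi}\leq\nv{u}\,\nw{\phi}$ by duality, while all remaining terms are handled by Cauchy--Schwarz; everything else in the argument is mechanical.
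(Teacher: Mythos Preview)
Your argument is correct and follows essentially the same route as the paper: split via the triangle inequality, apply the discrete inf-sup condition \eqref{3disinfsupin} to the discrete difference, use Galerkin orthogonality \eqref{3galor} to pass to $(z_h-u,w_h-v)$, and conclude by continuity of $a$ together with $\nvh{\cdot}\leq\nv{\cdot}$. The paper merely reverses the order (inf-sup first, triangle inequality last) and leaves the boundedness of $a$ implicit, whereas you spell it out; otherwise the proofs are the same.
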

\begin{proof}
Let $(z_h,w_h)\in\mathcal{V}_h$. Then by the discrete inf-sup stability condition \eqref{3disinfsupin}, we have
\begin{equation*}
	\begin{split}
		C\vert\vert(u_h-z_h,v_h-w_h)\vert\vert_{\mathcal{V}_h}&\leq\sup_{0\neq(\phi_h,\psi_h)\in\mathcal{W}_h} \frac{a(u_h-z_h,v_h-w_h;\phi_h,\psi_h)}{\vert\vert(\phi_h,\psi_h)\vert\vert_{\mathcal{W}_h}}\\
		&=\sup_{0\neq(\phi_h,\psi_h)\in\mathcal{W}_h} \frac{a(u_h-u,v_h-v;\phi_h,\psi_h)+a(u-z_h,v-w_h;\phi_h,\psi_h)}{\vert\vert(\phi_h,\psi_h)\vert\vert_{\mathcal{W}_h}}\\
		&\leq C'\vert\vert (u-z_h,v-w_h)\vert\vert_{\mathcal{V}}.
	\end{split}
\end{equation*}
Hence, by the triangular inequality, we get the desired result.
\end{proof}
Next, we recall the standard approximation result for the isogeometric spaces which is stated in the following theorem.
\begin{theorem}\cite{montardini2020space,LOLI20202586}\label{appres} Let $q_t\geq 1$, $q_s\geq 1$ with $u\in V\cap (H^{q_t}(0,T)\otimes H^1(\Omega))\cap( H^1(0,T)\otimes H^{q_s}(\Omega))$ and $v\in W\cap  (H^{q_t}(0,T)\otimes H^1(\Omega))\cap (H^1(0,T)\otimes H^{q_s}(\Omega))$. Then there exists a projection $\Pi_h: L^2(0,T)\otimes L^2(\Omega)\rightarrow V_h$ such that
	\begin{equation*}
		\vert\vert u-\Pi_h u\vert\vert_V\leq C(h_t^{m_t-1}\vert\vert u\vert\vert_{H^{m_t}(0,T)\otimes H^1(\Omega))}+h_s^{m_s-1}\vert\vert u\vert\vert_{H^1(0,T)\otimes H^{m_s}(\Omega)})
	\end{equation*}
    and 
    \begin{equation*}
		\vert\vert v-\Pi_h v\vert\vert_W\leq C(h_t^{m_t-1}\vert\vert u\vert\vert_{H^{m_t}(0,T)\otimes H^1(\Omega))}+h_s^{m_s-1}\vert\vert u\vert\vert_{H^1(0,T)\otimes H^{m_s}(\Omega)}),
	\end{equation*}
where $m_s:=\mbox{min}\{q_s,p_s+1\}$, $m_t:=\mbox{min}\{q_t,p_t+1\}$. 
\end{theorem}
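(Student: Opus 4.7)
The plan is to construct $\Pi_h$ as a tensor product of stable univariate B-spline quasi-interpolants on the parametric cylinder and then pull back through $\tilde{\mathbf{G}}$. Concretely, set $\Pi_h:=\Pi_{h_t}\otimes\Pi_{h_s}$, where $\Pi_{h_s}:L^2(\Omega)\to V_{h_s}$ and $\Pi_{h_t}:L^2(0,T)\to V_{h_t}$ are univariate quasi-interpolants of the type used in the isogeometric approximation literature. Under Assumption 1 (quasi-uniformity) and Assumption 2 (sufficient smoothness of the splines), these projectors are $L^2$-stable and satisfy the standard univariate estimates $\vert\vert w-\Pi_{h_*}w\vert\vert_{H^k}\leq Ch_*^{m-k}\vert\vert w\vert\vert_{H^m}$ for $0\leq k\leq m\leq p_*+1$; since we work on the reduced spaces $V_{h_s}$ and $V_{h_t}$, the images $\Pi_h u$ and $\Pi_h v$ automatically respect the homogeneous Dirichlet-in-space and vanishing-at-$t=0$-in-time conditions encoded in $V$.

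First, I would treat the $W$-component that is common to both bounds. Using the standard tensor-product splitting
\[
u-\Pi_h u=(I-\Pi_{h_s})u+\Pi_{h_s}(I-\Pi_{h_t})u,
\]
applying the univariate spatial estimate to the first summand slice by slice in $t$, and combining the univariate temporal estimate with $L^2$-stability of $\Pi_{h_s}$ on the second, one obtains
\[
\vert\vert u-\Pi_h u\vert\vert_{L^2(H_0^1(\Omega))}\leq C\bigl(h_s^{m_s-1}\vert\vert u\vert\vert_{H^1(0,T)\otimes H^{m_s}(\Omega)}+h_t^{m_t-1}\vert\vert u\vert\vert_{H^{m_t}(0,T)\otimes H^1(\Omega)}\bigr).
\]
The same argument applied to $v\in W$ yields the second claim of the theorem and also delivers the $W$-piece of the $V$-norm of $u-\Pi_h u$.

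Second, I would control the remaining contribution $\vert\vert\partial_t(u-\Pi_h u)\vert\vert_{L^2(H^{-1}(\Omega))}$ to $\vert\vert u-\Pi_h u\vert\vert_V$ by duality,
\[
\vert\vert\partial_t(u-\Pi_h u)\vert\vert_{L^2(H^{-1}(\Omega))}=\sup_{0\neq\phi\in L^2(H_0^1(\Omega))}\frac{\intq\partial_t(u-\Pi_h u)\phi\dx}{\vert\vert\phi\vert\vert_{L^2(H_0^1(\Omega))}}.
\]
Because $u-\Pi_h u$ vanishes at $t=0$ by construction of $V_{h_t}$, integration by parts in $t$ produces no boundary terms, and the temporal derivative can be shifted onto the test function $\phi$; applying the univariate estimates separately in space and time, together with $L^2$-stability of the projectors, then yields the claimed anisotropic rate $h_s^{m_s-1}+h_t^{m_t-1}$ with the norms indicated on the right-hand side.

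The main obstacle is exactly this second step: obtaining the $L^2(H^{-1})$ bound on the time derivative with the same optimal anisotropic rate requires the temporal projector to interact favourably with $\partial_t$. In practice one exploits either a discrete de Rham-type commuting property of the form $\partial_t\Pi_{h_t}=\Pi_{h_t}^{(-1)}\partial_t$, with $\Pi_{h_t}^{(-1)}$ landing in the integrated spline space of one lower degree, or a carefully staged duality argument that decouples the space and time directions without picking up extra factors of $h^{-1}$. Both routes have been carried out in \cite{montardini2020space,LOLI20202586}; the present theorem is a direct specialization of those constructions to the spaces $V$ and $W$ used here, after which Lemma~\ref{3pro1cea} converts the projection estimate into the a priori error bound for the scheme.
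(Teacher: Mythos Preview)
The paper does not give its own proof of this theorem: it is stated with the citation \cite{montardini2020space,LOLI20202586} and used as a black box, so there is nothing to compare your argument against. Your sketch is essentially the construction carried out in those references (tensor-product quasi-interpolants on the parametric domain, pushed forward through $\tilde{\mathbf{G}}$, combined with the standard splitting $(I-\Pi_{h_s})+\Pi_{h_s}(I-\Pi_{h_t})$), and the $W$-estimate part is fine.

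One genuine gap, however, is your proposed treatment of $\vert\vert\partial_t(u-\Pi_h u)\vert\vert_{L^2(H^{-1}(\Omega))}$ by integrating by parts in $t$ against a test function $\phi\in L^2(H_0^1(\Omega))$. This does not work as written: such $\phi$ has no time regularity, so $\partial_t\phi$ is not available, and even if it were, the boundary term at $t=T$ does not vanish (only $t=0$ is controlled by the construction of $V_{h_t}$). The route actually taken in \cite{montardini2020space,LOLI20202586}, which you mention as an alternative, is the correct one: either bound $\vert\vert\cdot\vert\vert_{L^2(H^{-1})}$ by $\vert\vert\cdot\vert\vert_{L^2(L^2)}$ and use a temporal quasi-interpolant that commutes with $\partial_t$ (so that $\partial_t(u-\Pi_h u)$ can be written as an approximation error of $\partial_t u$), or work directly with the anisotropic estimates for $\partial_t(u-\Pi_h u)$ in $L^2(L^2)$. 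Your integration-by-parts suggestion should be dropped in favour of that.
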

We can now derive the following a priori error estimates for the space-time isogeometric scheme \eqref{3pro1dst}.
\begin{theorem}\label{3pro1error}
For two positive integers $q_t,\ q_s\geq 1,$  let $(u,v)\in\mathcal{V}$ be the solution of \eqref{3pro1bf} with $u\in V\cap (H^{q_t}(0,T)\otimes H^1(\Omega))\cap (H^1(0,T)\otimes H^{q_s}(\Omega))$,  $v\in W\cap  (H^{q_t}(0,T)\otimes H^1(\Omega))\cap (H^1(0,T)\otimes H^{q_s}(\Omega))$  and $(u_h,v_h)\in\mathcal{V}_h$ be the solution of \eqref{3pro1dst}. Then
\begin{equation*}
	\begin{split}
			\nvh{u-u_h}^2+\nw{v-v_h}^2\leq 
			&\ C[h^{2m_t-2}(\vert\vert u\vert\vert_{H^{m_t}(0,T)\otimes H^1(\Omega)}^2+\vert\vert v\vert\vert_{H^{m_t}(0,T)\otimes H^1(\Omega)}^2)\\
			&+h^{2m_s-2}(\vert\vert u\vert\vert_{H^1(0,T)\otimes H^{m_s}(\Omega)}^2+\vert\vert v\vert\vert_{H^1(0,T)\otimes H^{m_s}(\Omega)}^2)],
	\end{split}
\end{equation*}
where $m_s:=\mbox{min}\{q_s,p_s+1\}$, $m_t:=\mbox{min}\{q_t,p_t+1\}$. 
\end{theorem}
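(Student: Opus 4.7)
The plan is to combine two ingredients that are already in place in the paper: the Cea-like quasi-best approximation bound of Lemma \ref{3pro1cea}, and the tensor-product B-spline approximation result of Theorem \ref{appres}. No further stability or PDE-theoretic input is required; the a priori estimate follows from substituting the canonical projection into the infimum on the right-hand side of Cea's inequality and then squaring.

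Concretely, I would start from
\[
\|(u-u_h, v-v_h)\|_{\mathcal{V}_h} \leq C \inf_{(z_h, w_h) \in \mathcal{V}_h} \|(u-z_h, v-w_h)\|_{\mathcal{V}},
\]
and select the particular trial pair $(z_h, w_h) = (\Pi_h u, \Pi_h v) \in \mathcal{V}_h$ supplied by Theorem \ref{appres}; this choice is admissible since $\Pi_h u \in V_h \subset V$ and $\Pi_h v \in V_h = W_h \subset W$. Squaring and expanding the product-space norm gives
\[
\|u-u_h\|_{V_h}^2 + \|v-v_h\|_W^2 \leq C^2 \big(\|u-\Pi_h u\|_V^2 + \|v-\Pi_h v\|_W^2\big).
\]
From here I would plug in the two approximation bounds of Theorem \ref{appres}, apply the elementary $(a+b)^2 \leq 2(a^2+b^2)$ to separate the temporal and spatial contributions, and finally use $h_t \leq h$ and $h_s \leq h$ (legitimate since $m_t-1, m_s-1 \geq 0$ under Assumption 2) to replace $h_t^{2m_t-2}$ and $h_s^{2m_s-2}$ by $h^{2m_t-2}$ and $h^{2m_s-2}$ respectively. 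Regrouping the four resulting terms by matching powers of $h$, and pairing the $u$- and $v$-contributions at each power, yields exactly the stated estimate.

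I do not anticipate a genuine obstacle: the argument amounts to bookkeeping layered on top of two already proven lemmas. The one point worth noting cleanly is the asymmetry in Cea's bound between $\|\cdot\|_{\mathcal{V}_h}$ on the left and $\|\cdot\|_{\mathcal{V}}$ on the right; because the projection error is controlled in the strong norm $\|\cdot\|_V$ by Theorem \ref{appres}, this asymmetry is harmless (and in fact works in our favour via the observation $\|w\|_{V_h}\leq \|w\|_V$ recorded earlier), so no sharpening of either lemma is needed.
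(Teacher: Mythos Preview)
Your proposal is correct and follows exactly the approach taken in the paper, which simply states that the proof follows by combining the approximation result of Theorem~\ref{appres} with the Cea-like Lemma~\ref{3pro1cea}. Your write-up in fact supplies more detail than the paper's own proof (the squaring, the use of $(a+b)^2\le 2(a^2+b^2)$, and the replacement of $h_s,h_t$ by $h$), all of which is correct and appropriate.
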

\begin{proof}
The proof follows by combining the approximation result given in Theorem \ref{appres} and Lemma \ref{3pro1cea}. 
\end{proof}
 \section{Numerical results}\label{3sec5numerics}
In this section, we will present several numerical experiments to validate the theoretical convergence result for the numerical scheme \eqref{3pro1dstf}. All the simulations are performed on a Intel Xeon processor having 256 GB RAM running at 3.90 GHz,  with Matlab R2024a and GeoPDEs toolbox \cite{geopde}. . In all experiments, we set $h_s=h_t=h$ (mesh size) and $p_s=p_t=p$ (degree of basis functions). We obtain the numerical solution of the system \eqref{3pro1} on meshes with $h=\frac{1}{4},\ \frac{1}{8},\ \frac{1}{16},\ \frac{1}{32},\ \frac{1}{64}$ and basis of degree $p=1,\ 2,\ 3,\ 4$. The problem domains $\Omega$ taken in the experiments are displayed in Figure \ref{3figdom}. The final time level is set to be 1 in all the examples. Note that norm $\vert\vert\cdot\vert\vert_{L^2(H_0^1(\Omega))\cap H^1(L^2(\Omega))}$ is an upper bound for the norm $\vert\vert\cdot\vert\vert_{V_h}$, which is easily computable. In each experiment, we compute the relative errors for $u_h$ in  ${L^2(H_0^1(\Omega))\cap H^1(L^2(\Omega))}$ norm and for $v_h$ in ${L^2(H_0^1(\Omega))}$ norm. We also have plotted the errors in $L^2(L^2(\Omega))$ norm,  even though the theoretical convergence estimates in this norm are not proven. We define \begin{equation*}
E_u^1=\frac{\vert\vert u-u_h\vert\vert_{L^2(H_0^1(\Omega))\cap H^1(L^2(\Omega))}}{\vert\vert u\vert\vert_{L^2(H_0^1(\Omega))\cap H^1(L^2(\Omega))}}\quad E_u^2=\frac{\vert\vert u-u_h\vert\vert_{L^2(L^2(\Omega))}}{\vert\vert u\vert\vert_{L^2(L^2(\Omega))}}
\end{equation*}
and 
\begin{equation*}
E_v^1=\frac{\vert\vert v-v_h\vert\vert_{L^2(H_0^1(\Omega))}}{\vert\vert u\vert\vert_{L^2(H_0^1(\Omega))}}\quad E_v^2=\frac{\vert\vert v-v_h\vert\vert_{L^2(L^2(\Omega))}}{\vert\vert v\vert\vert_{L^2(L^2(\Omega))}}.
\end{equation*}
\\\\
\begin{figure}[H]
	\centering
	\begin{subfigure}{0.3\textwidth}
		\includegraphics[height=5.5cm,width=5.5cm]{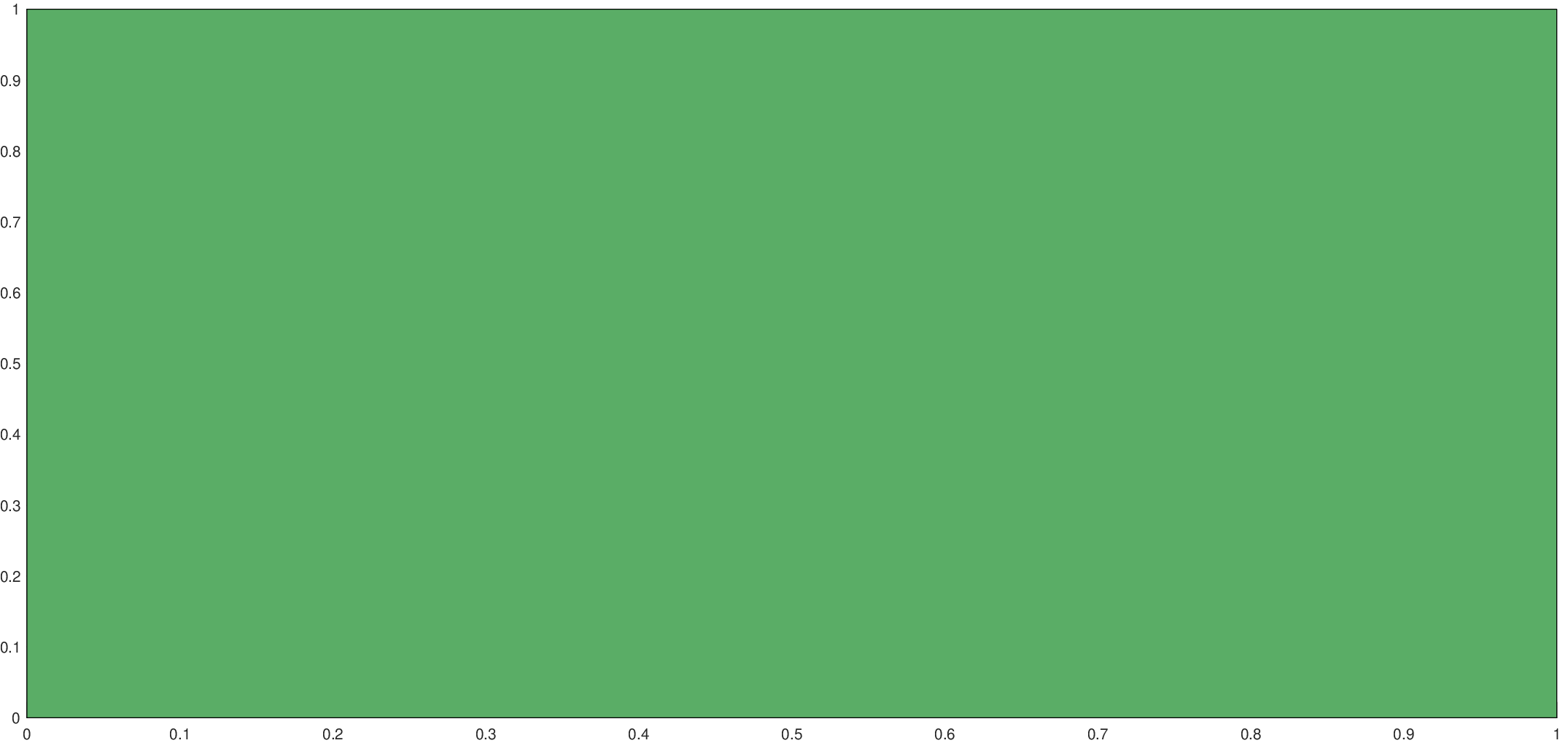}
		\caption{Square shaped domain.}
		\label{fig:square}
	\end{subfigure}
	\hspace{1.5cm}
	\begin{subfigure}{0.3\textwidth}
		\includegraphics[height=5.5cm,width=5.5cm]{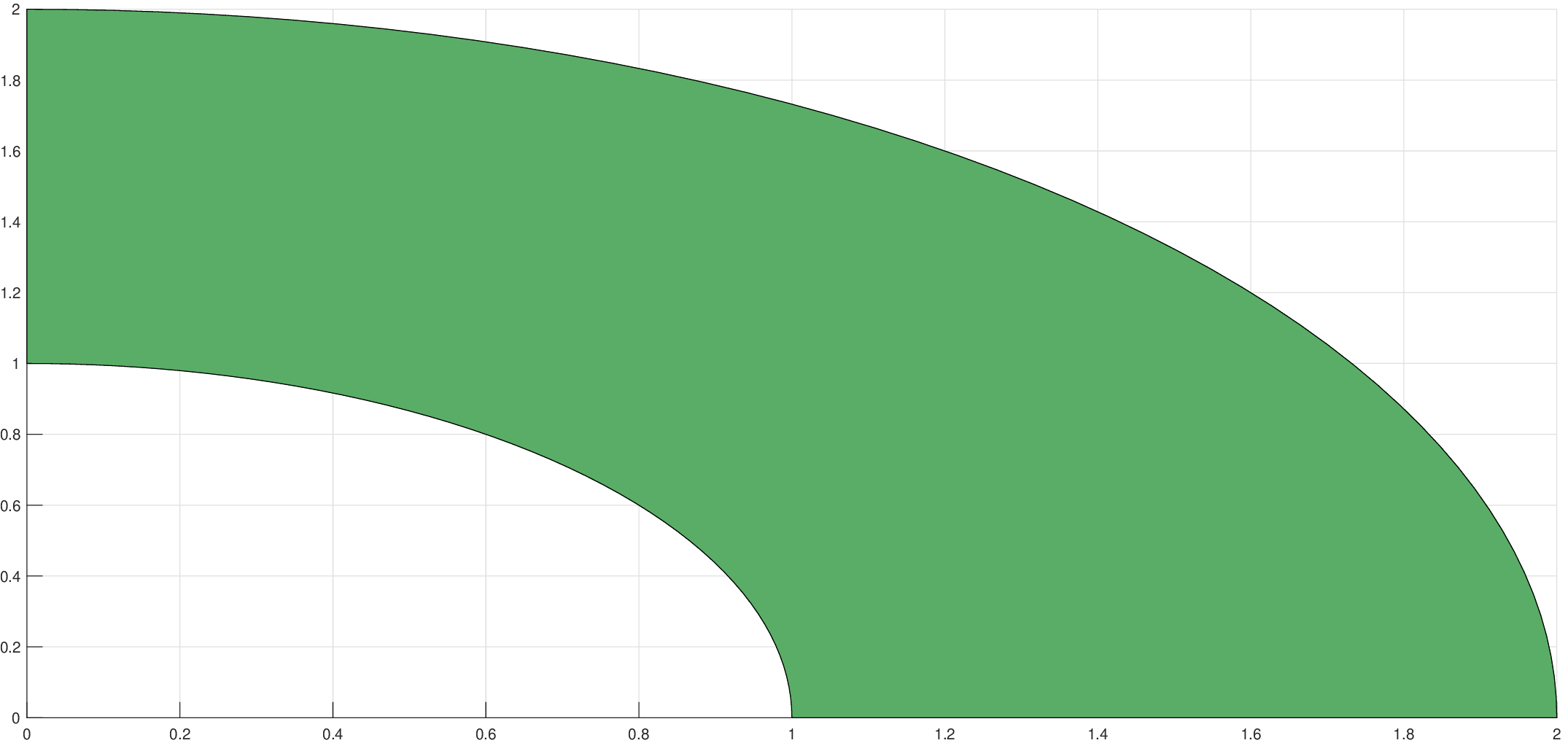}
		\caption{Ring shaped domain.}
		\label{fig:Ring shaped}
	\end{subfigure}
	\caption{Computational domains.}
	\label{3figdom}
\end{figure}
\hspace{-0.6cm}\textbf{Example 1:} In this example, we consider the domain $\Omega=(0,1)\times(0,1)$ displayed in Figure \ref{fig:square}. We choose the function $f$, the initial and boundary conditions in problem \eqref{3eqnmain} such that $u(x,y,t)=\sin(\pi t)\sin(\pi x)\sin(\pi y)$. In Figures \ref{fig:square_u} and \ref{fig:square_v}, the relative errors $E_u^1,\ E_u^2,\ E_v^1,\ E_v^2$ are displayed. From Figures \ref{fig:square_u_H1} and \ref{fig:square_v_H1}, we observe that the convergence rate in $L^2(H_0^1(\Omega))\cap H^1(L^2(\Omega))$ and $L^2(H_0^1(\Omega))$ norms is $\mathcal{O}(h^p)$ which confirms our theoretical estimate (Theorem \ref{3pro1error}). We also display the relative errors in $L^2(L^2(\Omega))$ norm in Figures \ref{fig:square_u_L2} and \ref{fig:square_v_L2} showing that the order of convergence in $L^2(L^2(\Omega))$ norm is $\mathcal{O}(h^{p+1})$.\\\\
\begin{figure}[H]
\begin{subfigure}{0.4\textwidth}
    \includegraphics[height=6cm,width=8cm]{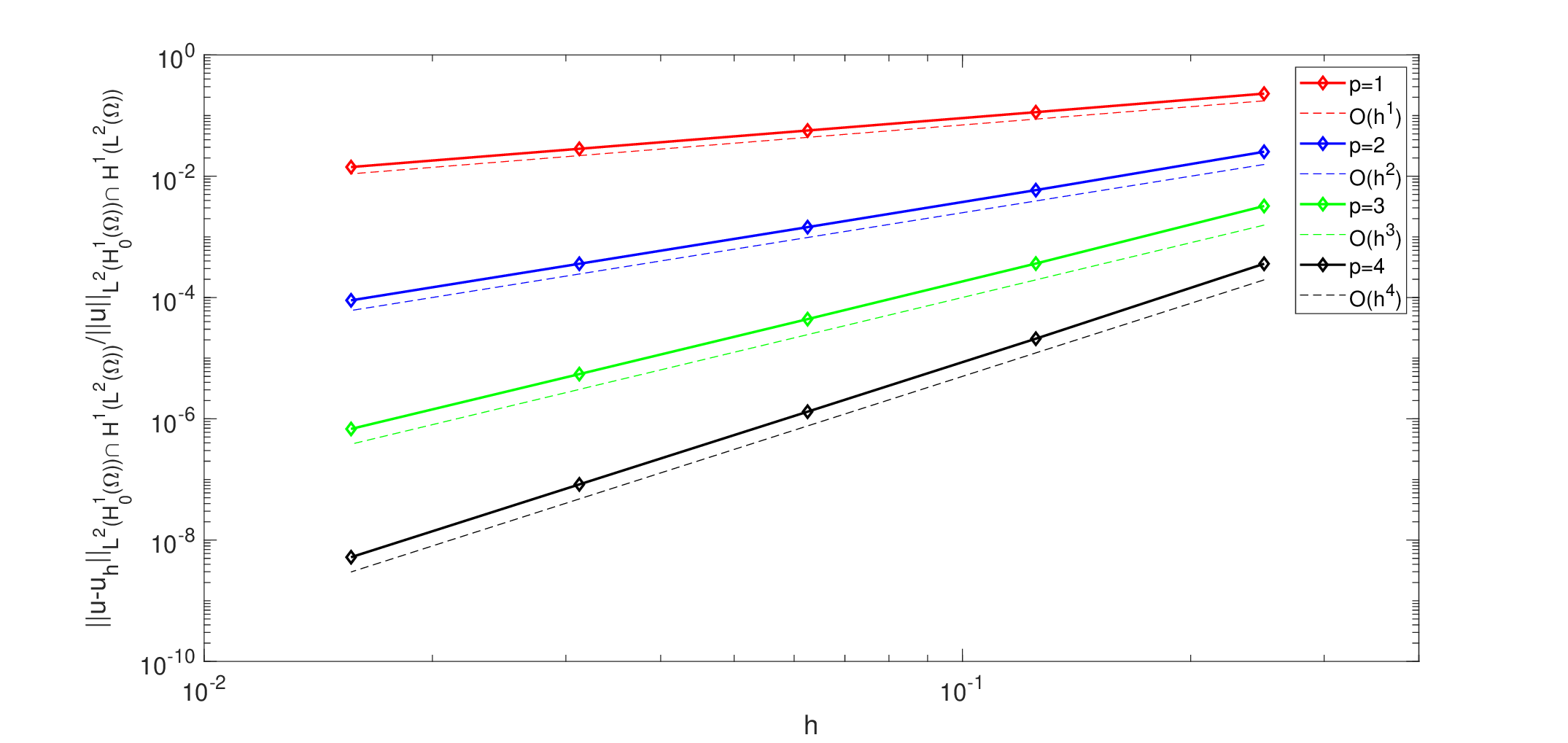}
    \caption[Caption]{}
    \label{fig:square_u_H1}
\end{subfigure}
\hspace{1cm}
\begin{subfigure}{0.4\textwidth}
    \includegraphics[height=6cm,width=8cm]{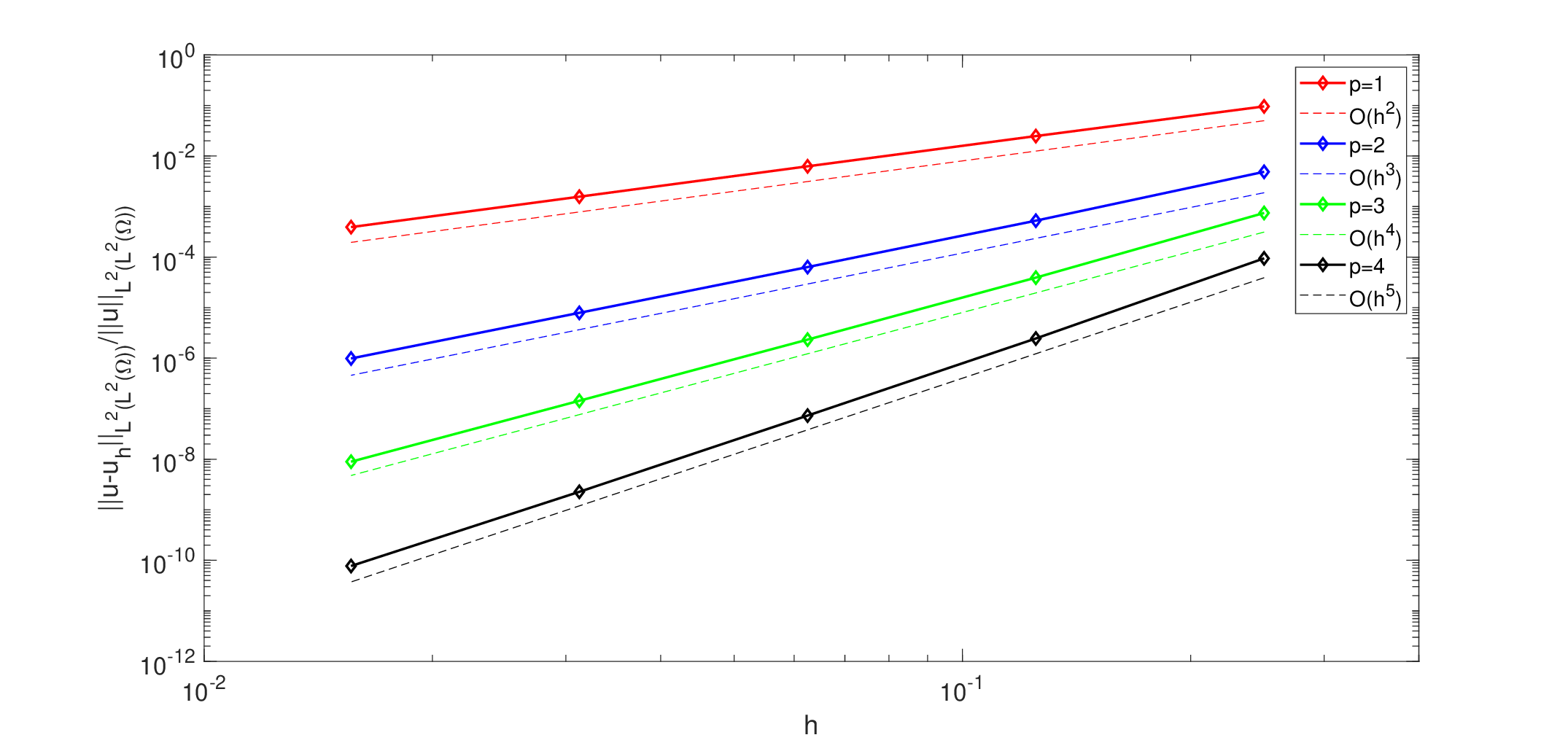}
     \caption[Caption]{}
    \label{fig:square_u_L2}
\end{subfigure}
\caption{Relative errors $E_u^1$ and $E_u^2$ for square domain.}
\label{fig:square_u}
\end{figure}

\begin{figure}[H]
\begin{subfigure}{0.4\textwidth}
    \includegraphics[height=6cm,width=8cm]{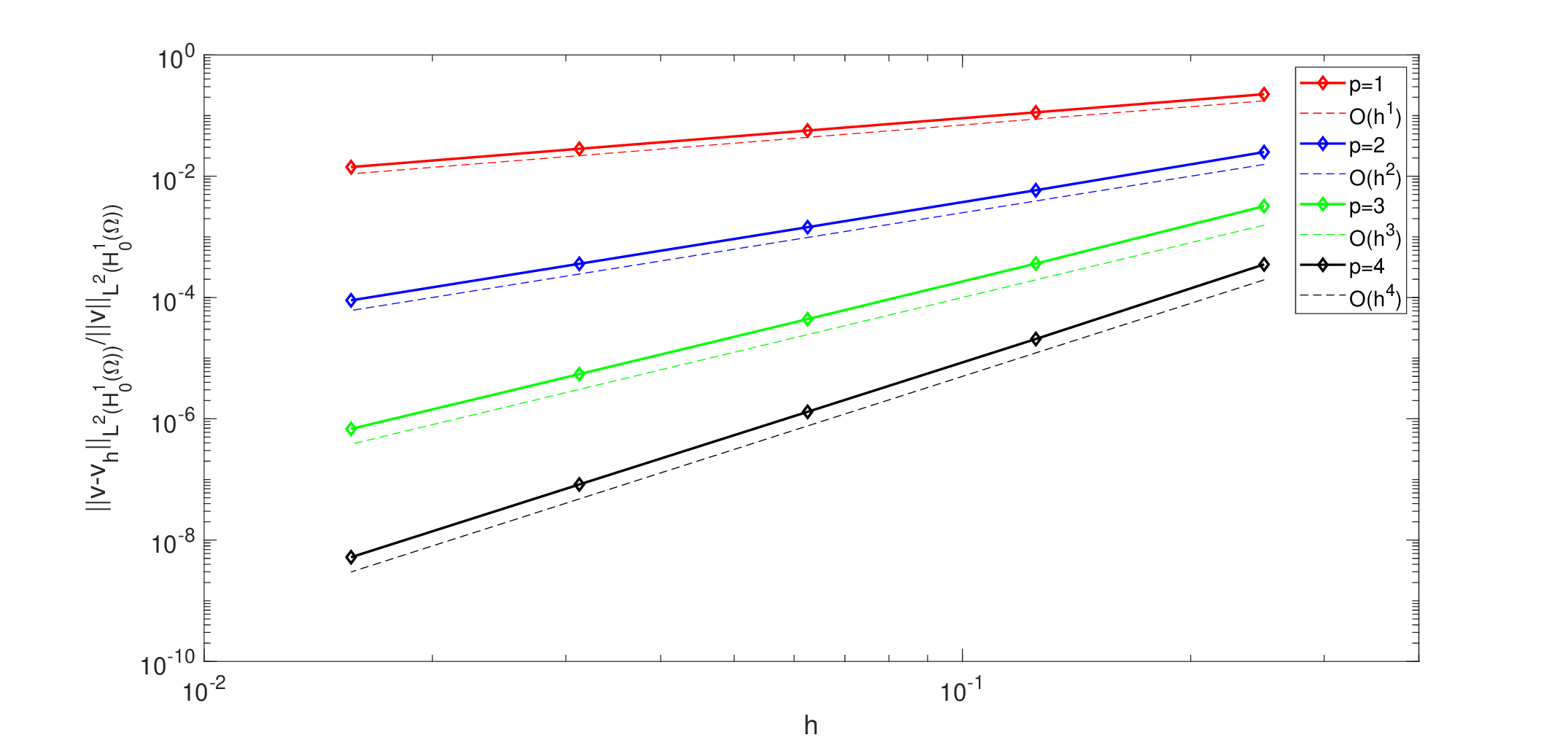}
     \caption[Caption]{}
    \label{fig:square_v_H1}
\end{subfigure}
\hspace{1cm}
\begin{subfigure}{0.4\textwidth}
    \includegraphics[height=6cm,width=8cm]{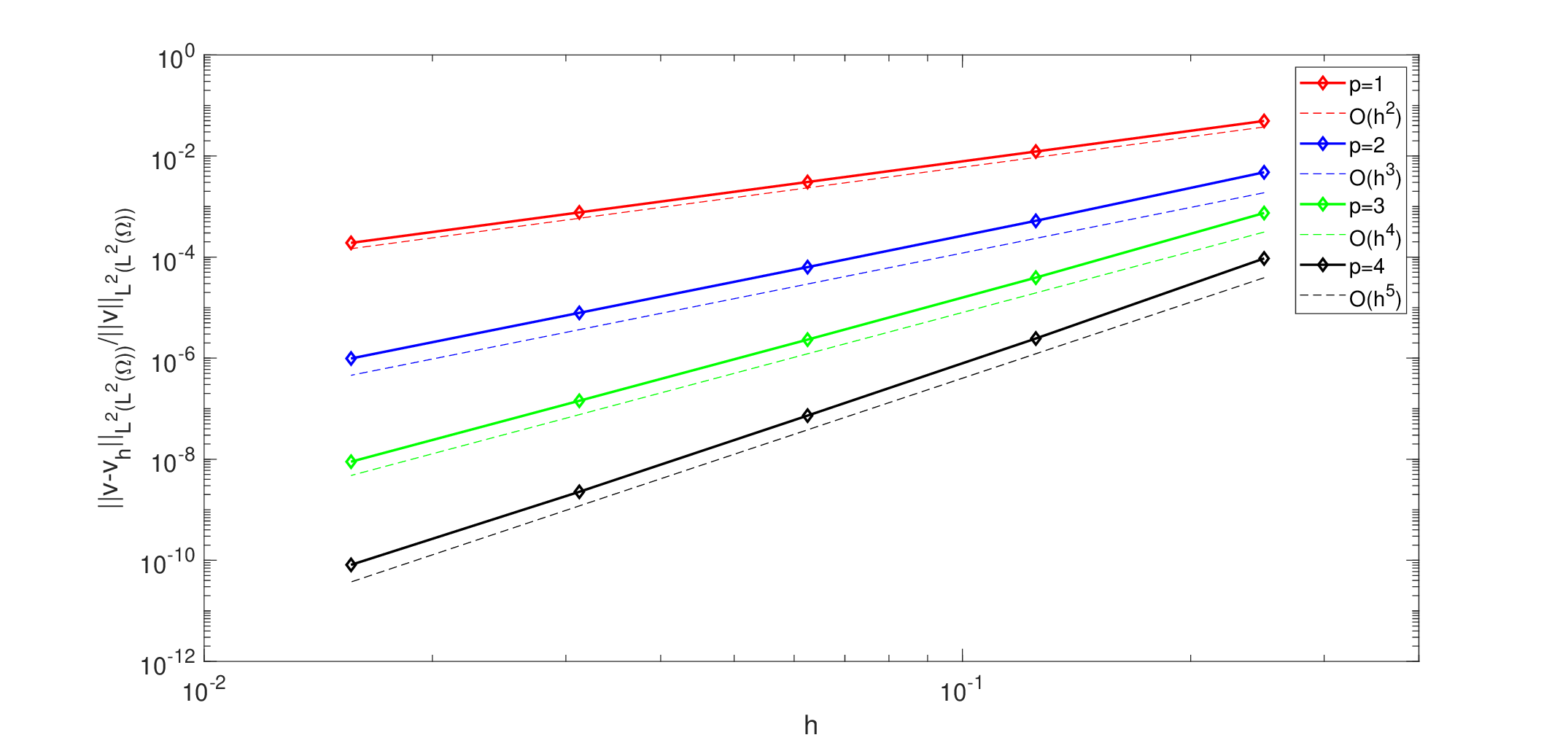}
     \caption[Caption]{}
    \label{fig:square_v_L2}
\end{subfigure}
\caption{Relative errors $E_v^1$ and $E_v^2$ for square domain.}
\label{fig:square_v}
\end{figure}

\hspace{-0.6cm}\textbf{Example 2:} In this example, we consider a non-convex ring shaped domain (see Figure \ref{fig:Ring shaped}). The initial and boundary conditions with the source term in \eqref{3eqnmain} are chosen such that the exact solution is $u(x,y,t)=tx^3y^3(x^2 + y^2 - 1)^3(x^2 + y^2 - 4)^3 $. We display the relative errors for $u_h$ and $v_h$ in Figure \ref{fig:ring_u} and Figure \ref{fig:ring_v} respectively. Though the theoretical results are proved for convex domains, we observe the optimal convergence rates for $u_h$ in $L^2(H_0^1(\Omega))\cap H^1(L^2(\Omega))$ norm and for $v_h$ in ${L^2(H_0^1(\Omega))}$ norm. This illustrates that for sufficiently smooth exact solution, the numerical scheme works even for non-convex domains.  
\begin{figure}[H]
\begin{subfigure}{0.4\textwidth}
    \includegraphics[height=6cm,width=8cm]{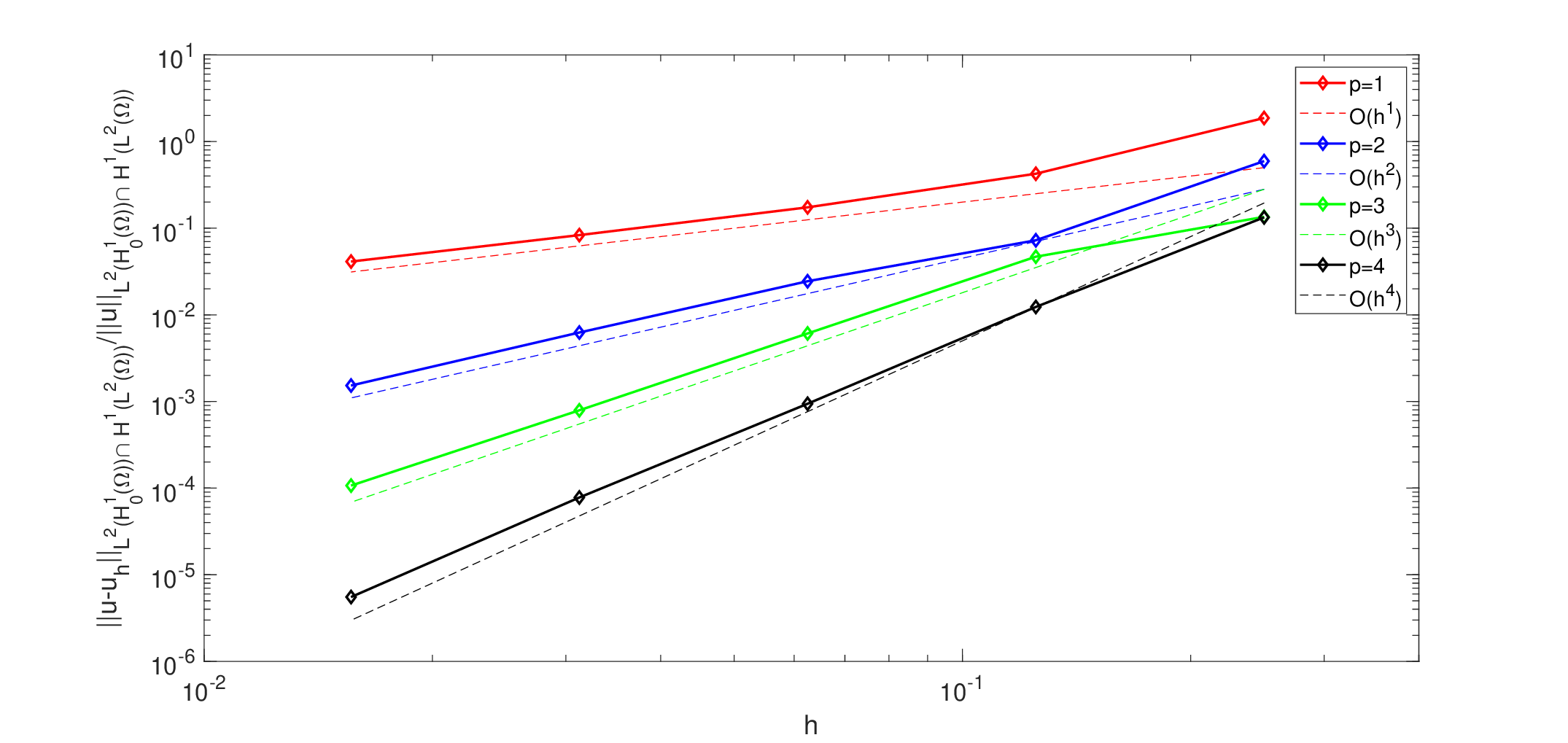}
    \caption[Caption]{}
    \label{fig:ring_u_H1}
\end{subfigure}
\hspace{1cm}
\begin{subfigure}{0.4\textwidth}
    \includegraphics[height=6cm,width=8cm]{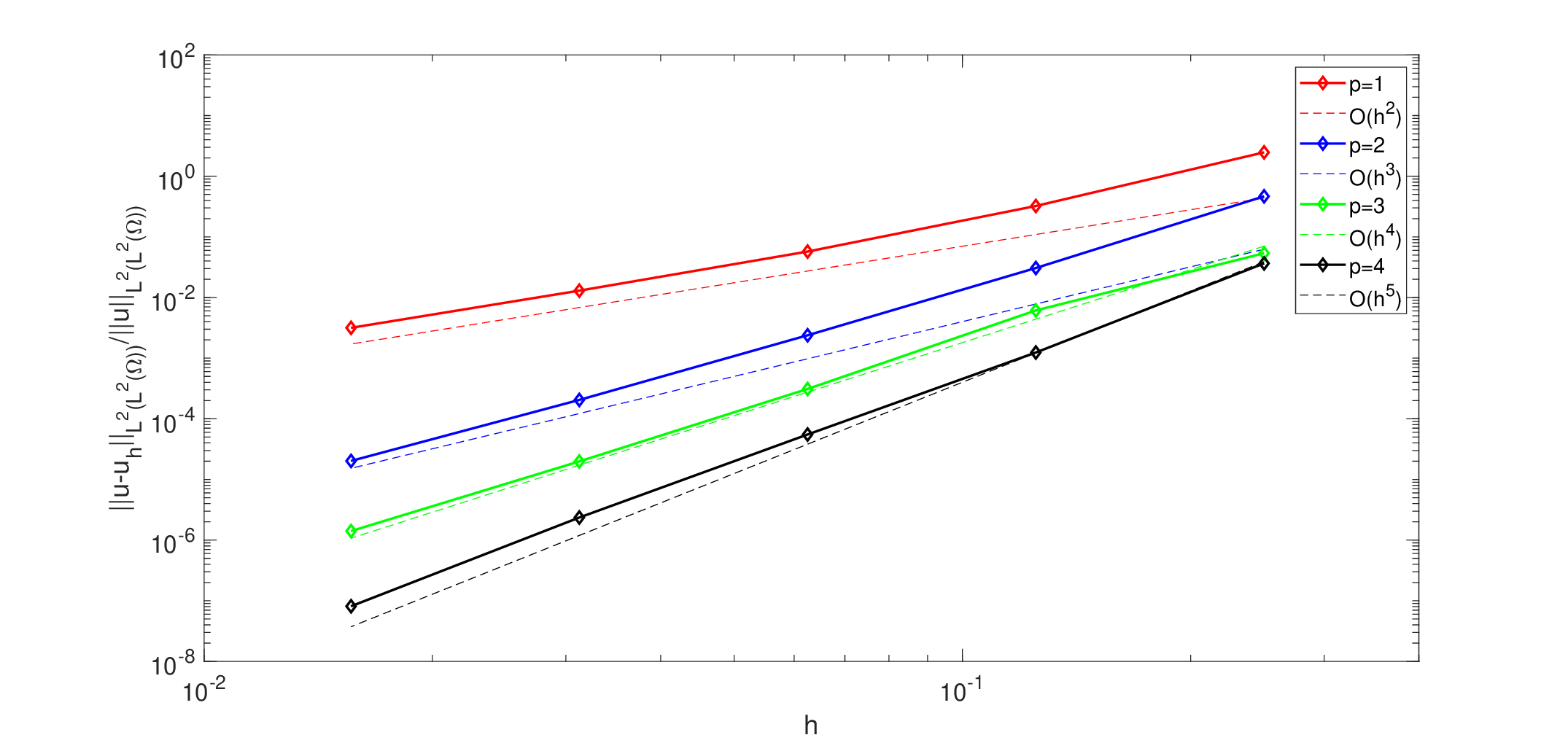}
     \caption[Caption]{}
    \label{fig:ring_u_L2}
\end{subfigure}
\caption{Relative errors $E_u^1$ and $E_u^2$ for ring shaped domain.}
\label{fig:ring_u}
\end{figure}

\begin{figure}[H]
\begin{subfigure}{0.4\textwidth}
    \includegraphics[height=6cm,width=8cm]{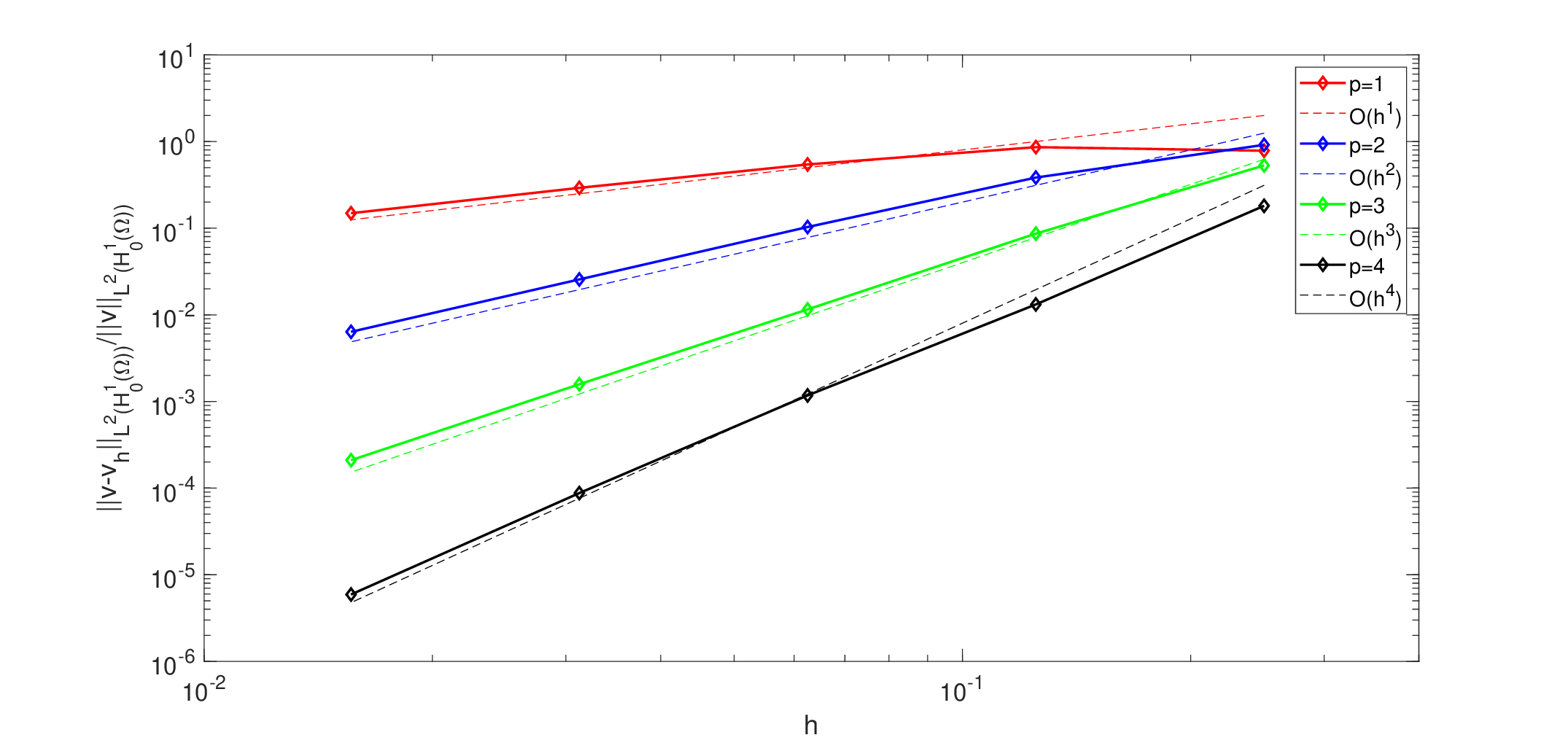}
    \caption[Caption]{}
    \label{fig:ring_v_H1}
\end{subfigure}
\hspace{1cm}
\begin{subfigure}{0.4\textwidth}
    \includegraphics[height=6cm,width=8cm]{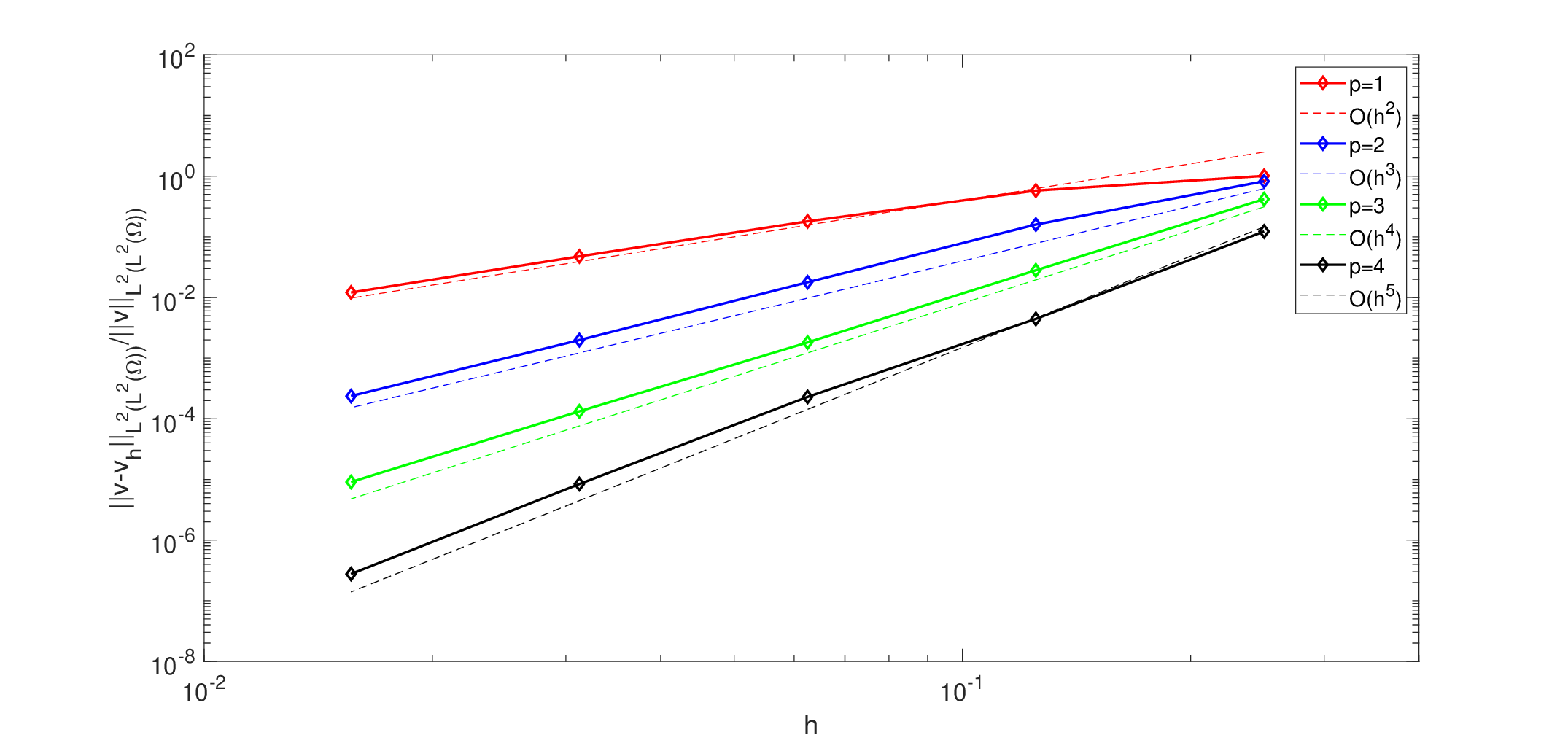}
    \caption[Caption]{}
    \label{fig:ring_v_L2}
\end{subfigure}
\caption{Relative errors $E_v^1$ and $E_v^2$ for ring shaped domain.}
\label{fig:ring_v}
\end{figure}



\section{Conclusions}\label{3sec5}
In the present work, we have analyzed the space-time isogeometric method for a linear fourth order time dependent problem. We have shown the unique solvability of the continuous and discrete variational formulations using the BNB theorem. The error estimates for the proposed method have been derived. Numerical results are provided which also confirm the theoretical findings. Further development is required to apply space-time IgA in the more general fourth order problems, like nonlinear EFK equation with other types of boundary conditions such as $u=\frac{\partial u}{\partial \nu}=0$ or $\frac{\partial u}{\partial \nu}=\frac{\partial \Delta u}{\partial \nu}=0$, where $\nu$ is the outward unit normal. Additionally, space-time formulation poses challenges in computational cost. Hence, the development of a suitable solver  with efficient preconditioning techniques for the linear system \eqref{3linsys} is also a desirable future work. \\



{\noindent{\bf Acknowledgment:}} The authors would like to acknowledge Dr. Debayan Maity for his insightful discussion and suggestions especially in the space-time weak formulation \eqref{3pro1bf} given in Section 2. The first author would like to acknowledge the financial support received from the Department of Science and Technology (DST), New Delhi, India through INSPIRE Fellowship.


\end{document}